\newtheorem{theorem}{Theorem}[section]
\newtheorem{corollary}[theorem]{Corollary}
\newtheorem{lemma}[theorem]{Lemma}
\theoremstyle{definition}
\newtheorem{definition}[theorem]{Definition}
\theoremstyle{example}
\begin{document}
\title[Spirallike univalent functions ...]{Certain subclasses of Spirallike univalent functions
related with Pascal distribution series}
\author{G. Murugusundaramoorthy }
\address{School of Advanced Sciences, Vellore Institute of Technology,
deemed to be university Vellore - 632014, Tamilnadu, India}
\email{gmsmoorthy@yahoo.com}
\begin{abstract}
The purpose of the present paper is to find the necessary and sufficient
conditions for the subclasses of analytic functions associated with Pascal distribution to be in subclasses of  spiral-like univalent functions  and inclusion relations for such subclasses  in the open unit disk
$\mathbb{D}.$ Further, we consider the properties of  integral operator related to Pascal
distribution series. Several corollaries and consequences of the main
results are also considered.

\textbf{Mathematics Subject Classification} (2010): 30C45.

\textbf{Keywords}: Univalent,  Spiral-like, Hadamard product, Pascal distribution
series.
\end{abstract}

\maketitle

\section{\protect\bigskip Introduction and definitions}
Denote by $\mathcal{A}$  the class of functions whose members are of the form %
\begin{equation}
f(z)=z+\sum_{n=2}^{\infty }a_{n}z^{n},  \label{e1.1}
\end{equation}%
which are analytic in the open unit disk $\mathbb{D}=\{z\in \mathbb{C}%
:\left\vert z\right\vert <1\}$ and normalized by the conditions $%
f(0)=0=f^{\prime }(0)-1.$ Let $\mathcal{S}$  be subclass of $\mathcal{A}$ whose members are given by \eqref{e1.1} and are univalent in $\mathbb{D}.$
For functions $f\in \mathcal{S}$  be given by (\ref{e1.1}) and $g\in \mathcal{S}$
given by $g(z)=z+\sum_{n=2}^{\infty }b_{n}z^{n},$ we define the Hadamard
product (or convolution) of $f$ and $g$ by
\begin{equation*}
(f\ast g)(z)=z+\sum\limits_{n=2}^{\infty }a_{n}b_{n}z^{n},\,\,\,\,z\in
\mathbb{D}.
\end{equation*}The two well known subclass of $\mathcal{S},$ are namely the class of starlike and convex functions (for details see Robertson \cite{10}).
A function $f\in \mathcal{S}$ is said to be starlike of order $\gamma $ $%
(0\leq \gamma <1),$ if and only if
\begin{equation*}
\mbox{Re}\left( \frac{zf^{\prime }(z)}{f(z)}\right) >\gamma \quad (z\in
\mathbb{D}).
\end{equation*}%
This function class is denoted by $\mathcal{S}^{\ast }(\gamma ).$ We also
write $\mathcal{S}^{\ast }(0)=:\mathcal{S}^{\ast },$ where $\mathcal{S}%
^{\ast }$ denotes the class of functions $f\in \mathcal{A}$ that $f(\mathbb{D%
})$ is starlike with respect to the origin.

A function $f\in \mathcal{S}$ is said to be convex of order $\gamma $ $%
(0\leq \gamma <1)$ if and only if
\begin{equation*}
\mbox{Re}\left( 1+\frac{zf^{\prime \prime }(z)}{f^{\prime }(z)}\right)
>\gamma \quad (z\in \mathbb{D}).
\end{equation*}%
This class is denoted by ${\mathcal{K}}(\gamma ).$ Further, ${\mathcal{K}}={%
\mathcal{K}}(0)$, the well-known standard class of convex functions.
By Alexander's relation, it is a known fact that  $$f \in \mathcal{K}\Leftrightarrow zf'(z) \in
\mathcal{S}^*.$$
A function $f\in \mathcal{S}$ is said to be spirallike  if
\begin{equation*}
\Re \left( e^{-i\alpha}\frac{zf^{\prime }(z)}{f(z)}\right) >0
\end{equation*}
for some $\alpha$ with $\mid\alpha\mid<\frac{\pi}{2}$ and for all $z\in\mathbb{D}$ this class of spiral-like function was introduced by\cite{spa}. Also $f(z)$ is convex spiral-like if $zf'(z)$ is spiral-like. Due to Murugusundramoorthy\cite{GMS1,GMS2}, we consider subclasses of spiral-like  functions as below:

\begin{definition}\label{defa}
For $0 \leq \rho <1,$  $0 \leq \gamma < 1$  then
\begin{eqnarray*}
&& \mathcal{S}(\xi,
\gamma,\rho):= \left \{ f \in \mathcal{S} : \mbox{Re~} \left (
e^{i \xi}\frac{zf'(z)}{(1-\rho)f(z) +\rho zf'(z)}\right )
> \gamma \cos \xi, \,\,\, |\xi|< \frac{\pi}{2},\,\,\,  z \in \mathbb{D}\right
\}.
\end{eqnarray*}
\end{definition}

By virtue of Alexander's relation, we define the following subclass:
\begin{definition}\label{defb}
For  $0 \leq\rho <1,$ $0 \leq \gamma < 1$  then
\begin{eqnarray*}
&& \mathcal{K}(\xi,
\gamma,\rho):= \left \{ f \in \mathcal{S} : \mbox{Re~} \left (
e^{i \xi}\frac{zf''(z)+f'(z)}{f'(z) +\rho zf''(z)}\right )
> \gamma \cos \xi, \,\,\, |\xi|< \frac{\pi}{2},\,\,\,  z \in \mathbb{D}\right
\}.
\end{eqnarray*}
\end{definition}
By specialising the parameter $\rho=0$ we remark the following :

\begin{definition}\label{defa1}
For $0 \leq \gamma < 1$  then
\begin{eqnarray*}
&& \mathcal{S}(\xi,
\gamma):= \left \{ f \in \mathcal{S} : \mbox{Re~} \left (
e^{i \xi}\frac{zf'(z)}{f(z) }\right )
> \gamma \cos \xi, \,\,\, |\xi|< \frac{\pi}{2},\,\,\,  z \in \mathbb{D}\right
\}\\
{\text{and}} \\
&& \mathcal{K}(\xi,
\gamma):= \left \{ f \in \mathcal{S} : \mbox{Re~} \left (
e^{i \xi}\left[1+\frac{zf''(z)}{f'(z)}\right]\right )
> \gamma \cos \xi, \,\,\, |\xi|< \frac{\pi}{2},\,\,\,  z \in \mathbb{D}\right
\}.
\end{eqnarray*}
\end{definition}
Now we state the necessary sufficient conditions for $f$ in the above classes.
\begin{lemma}\cite{GMS1,GMS2}\label{lem1}
A function $f(z)$ of the form (\ref{e1.1}) is in
$\mathcal{S}(\xi, \gamma,\rho)$ if
\begin{equation}\label{GMS1}
\sum\limits_{n=2}^{\infty}   [ (1-\rho)(n-1) \sec \xi +
(1-\gamma) (1+n\rho -\rho)  ] |a_n| \leq 1-\gamma,
\end{equation}
where $|\xi| < \frac{\pi}{2},$ $0 \leq\rho <1,$ $0 \leq
\gamma <1.$
\end{lemma}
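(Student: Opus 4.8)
The plan is to verify the defining inequality of $\mathcal{S}(\xi,\gamma,\rho)$ directly from the coefficient bound, using the standard reduction of a real-part condition to a modulus estimate. Write $D(z)=(1-\rho)f(z)+\rho zf'(z)$ and $w(z)=zf'(z)/D(z)$; since $f(z)=z+\sum_{n\ge2}a_nz^n$ we have $w(0)=1$. The key observation is that
\begin{equation*}
\mbox{Re}\left(e^{i\xi}w(z)\right)=\cos\xi+\mbox{Re}\left(e^{i\xi}(w(z)-1)\right)\ge \cos\xi-\left|w(z)-1\right|,
\end{equation*}
because $|e^{i\xi}|=1$. Hence it suffices to prove the single inequality $|w(z)-1|<(1-\gamma)\cos\xi$ for every $z\in\mathbb{D}$; this immediately yields $\mbox{Re}(e^{i\xi}w(z))>\cos\xi-(1-\gamma)\cos\xi=\gamma\cos\xi$, which is exactly membership in $\mathcal{S}(\xi,\gamma,\rho)$ (univalence then comes for free, spirallike functions being univalent).

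Next I would compute
\begin{equation*}
w(z)-1=\frac{zf'(z)-D(z)}{D(z)}=\frac{(1-\rho)\bigl(zf'(z)-f(z)\bigr)}{D(z)},
\end{equation*}
and expand both pieces as power series,
\begin{equation*}
zf'(z)-f(z)=\sum_{n=2}^{\infty}(n-1)a_nz^n,\qquad D(z)=z+\sum_{n=2}^{\infty}\bigl(1+(n-1)\rho\bigr)a_nz^n.
\end{equation*}
Applying the triangle inequality to the numerator and the reverse triangle inequality to the denominator, and letting $|z|\to1^-$ (the ratio being increasing in $|z|$, so the estimate is strict inside $\mathbb{D}$), gives
\begin{equation*}
\left|w(z)-1\right|\le\frac{(1-\rho)\sum_{n=2}^{\infty}(n-1)|a_n|}{1-\sum_{n=2}^{\infty}\bigl(1+(n-1)\rho\bigr)|a_n|}.
\end{equation*}
The target bound $|w-1|<(1-\gamma)\cos\xi$ then reduces, after clearing the positive denominator and dividing through by $\cos\xi$, to
\begin{equation*}
\sum_{n=2}^{\infty}\Bigl[(1-\rho)(n-1)\sec\xi+(1-\gamma)\bigl(1+(n-1)\rho\bigr)\Bigr]|a_n|\le 1-\gamma,
\end{equation*}
which is precisely hypothesis \eqref{GMS1} once one writes $1+(n-1)\rho=1+n\rho-\rho$.

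The two triangle-inequality estimates are routine; the points that actually need care are structural rather than computational. First, one must check that the denominator $1-\sum(1+(n-1)\rho)|a_n|$ is strictly positive, which follows from \eqref{GMS1} itself: discarding the nonnegative $\sec\xi$ terms leaves $(1-\gamma)\sum(1+(n-1)\rho)|a_n|\le 1-\gamma$, so the subtracted sum is at most $1$. Second, the factor $\cos\xi$ must be tracked through every step, since it is the appearance of the threshold $(1-\gamma)\cos\xi$ that forces the $\sec\xi$ weighting on the first summand of \eqref{GMS1}. I therefore expect the main obstacle to be bookkeeping—confirming that the elementary bound $\mbox{Re}(e^{i\xi}w)\ge\cos\xi-|w-1|$ is exactly tight enough to recover the stated constants, rather than a weaker sufficient condition—and not any deep estimate.
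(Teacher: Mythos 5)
Your argument is correct in substance, but note the point of comparison first: the paper contains no proof of Lemma \ref{lem1} at all --- it is quoted from \cite{GMS1,GMS2} --- and the route you take is precisely the standard sufficiency proof used in those sources for classes of this type. Setting $w(z)=zf'(z)/D(z)$ with $D(z)=(1-\rho)f(z)+\rho zf'(z)$, reducing $\mbox{Re}\,(e^{i\xi}w)>\gamma\cos\xi$ to the modulus bound $|w(z)-1|<(1-\gamma)\cos\xi$ via $\mbox{Re}\,(e^{i\xi}w)\ge\cos\xi-|w-1|$, and then estimating numerator and denominator coefficientwise is the classical Silverman-style argument. Your identities $zf'(z)-D(z)=(1-\rho)\bigl(zf'(z)-f(z)\bigr)$ and $D(z)=z+\sum_{n\ge2}\bigl(1+(n-1)\rho\bigr)a_nz^n$ are right, clearing the denominator and dividing by $\cos\xi>0$ does reproduce the hypothesis \eqref{GMS1} exactly, and your observation that the estimate is strict for $|z|<1$ (and trivial when $f(z)=z$) correctly handles possible equality in \eqref{GMS1}.

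Two loose ends deserve tightening. First, your parenthetical that ``univalence comes for free, spirallike functions being univalent'' is only justified for $\rho=0$ (\v{S}pa\v{c}ek's theorem); membership in $\mathcal{S}(\xi,\gamma,\rho)$ requires $f\in\mathcal{S}$ for all $0\le\rho<1$, so this needs an argument. The fix is cheap and stays inside your computation: since $\sec\xi\ge1$ and $(1-\rho)+(1-\gamma)\rho=1-\gamma\rho\ge1-\gamma$, each bracket in \eqref{GMS1} dominates $n(1-\gamma)$, so the hypothesis forces $\sum_{n\ge2}n|a_n|\le1$; hence $|f'(z)-1|<1$ in $\mathbb{D}$, so $\mbox{Re}\,f'(z)>0$ and $f$ is univalent by the Noshiro--Warschawski theorem. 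Second, your denominator-positivity check yields only $\sum_{n\ge2}\bigl(1+(n-1)\rho\bigr)|a_n|\le1$, i.e.\ the limiting denominator could vanish; this is harmless because $|z|^{n-1}<1$ makes the series strictly smaller inside the open disk (so $D(z)\ne0$ there, which is needed before $w$ is even defined), but that one-line remark should be made explicit. With these two patches the proof is complete and matches the cited sources' method.
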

\begin{lemma}\label{lem2}
A function $f(z)$ of the form (\ref{e1.1}) is in
$\mathcal{S}(\xi, \gamma,\rho)$ if
\begin{equation}\label{GMS2}
\sum\limits_{n=2}^{\infty}  n [ (1-\rho)(n-1) \sec \xi +
(1-\gamma) (1+n\rho -\rho)  ] |a_n| \leq 1-\gamma,
\end{equation}
where $|\xi| < \frac{\pi}{2},$ $0 \leq\rho <1,$ $0 \leq
\gamma <1.$
\end{lemma}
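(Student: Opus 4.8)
The plan is to obtain the statement directly from Lemma \ref{lem1} by a term-by-term majorization, since the hypothesis \eqref{GMS2} is nothing but \eqref{GMS1} reinforced by the extra factor $n$. First I would record that, under the standing restrictions $|\xi|<\frac{\pi}{2}$, $0\le\rho<1$ and $0\le\gamma<1$, the bracketed coefficient
$$
c_n := (1-\rho)(n-1)\sec\xi + (1-\gamma)(1+n\rho-\rho)
$$
is strictly positive for every $n\ge 2$: indeed $\sec\xi>0$, $(1-\rho)(n-1)>0$ because $\rho<1$ and $n\ge 2$, $(1-\gamma)>0$, and $1+n\rho-\rho=1+\rho(n-1)\ge 1$, so each summand $c_n|a_n|$ is nonnegative. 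This positivity is the only point that genuinely needs checking, and it is precisely what makes the factor $n$ enlarge—rather than merely reshuffle—the size of each term.

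With $c_n>0$ in hand, the core step is the elementary chain
$$
\sum_{n=2}^{\infty} c_n|a_n| \;\le\; \sum_{n=2}^{\infty} n\,c_n|a_n| \;\le\; 1-\gamma,
$$
where the first inequality uses $n\ge 2>1$ applied to each nonnegative summand and the second is exactly the hypothesis \eqref{GMS2}. The left-hand inequality says that \eqref{GMS1} holds, so Lemma \ref{lem1} applies verbatim and yields $f\in\mathcal{S}(\xi,\gamma,\rho)$, which is the assertion.

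I do not anticipate a real obstacle: once the sign of $c_n$ is settled the conclusion is immediate, and no appeal to the Alexander relation or to the companion class $\mathcal{K}(\xi,\gamma,\rho)$ is needed for the stated conclusion. One should in fact resist the temptation to route the argument through $zf'(z)$, since replacing $a_n$ by $n a_n$ in Lemma \ref{lem1} would land in $\mathcal{K}(\xi,\gamma,\rho)$ rather than in $\mathcal{S}(\xi,\gamma,\rho)$; the majorization above is the correct way to reach the class actually named here. If a fully self-contained argument were preferred, I would instead re-run the proof of Lemma \ref{lem1} starting from Definition \ref{defa}, estimating the modulus of the auxiliary expression built from the numerator $zf'(z)$ and the denominator $(1-\rho)f(z)+\rho zf'(z)$, comparing it with the corresponding real part, and reading off the coefficient inequality, which is again \eqref{GMS1} and is dominated term by term by \eqref{GMS2}. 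The majorization route is the shorter of the two, and I would present it as the proof.
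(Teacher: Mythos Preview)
Your majorization argument is correct and proves the lemma exactly as it is stated: since each $c_n|a_n|\ge 0$ and $n\ge 2$, the hypothesis \eqref{GMS2} dominates \eqref{GMS1} term by term, so Lemma~\ref{lem1} applies and $f\in\mathcal{S}(\xi,\gamma,\rho)$.

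The paper, however, does not argue this way. Its proof invokes the Alexander-type relation $f\in\mathcal{K}(\xi,\gamma,\rho)\Longleftrightarrow zf'\in\mathcal{S}(\xi,\gamma,\rho)$ and substitutes $na_n$ for $a_n$ in Lemma~\ref{lem1}; that produces \eqref{GMS2} as a sufficient condition for $f\in\mathcal{K}(\xi,\gamma,\rho)$, not for $f\in\mathcal{S}(\xi,\gamma,\rho)$. In other words, the ``$\mathcal{S}$'' in the displayed statement is evidently a misprint for ``$\mathcal{K}$''---this is corroborated by the $\rho=0$ specialization in Lemma~\ref{lem2a} and by the way the result is invoked in Theorem~\ref{th11}. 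Thus your proof and the paper's proof draw two different (both valid) consequences from the same hypothesis: yours gives the weaker $\mathcal{S}$-membership via an immediate comparison with Lemma~\ref{lem1}, while the paper's Alexander route gives the intended $\mathcal{K}$-membership. You in fact anticipated this alternative and set it aside because it lands in $\mathcal{K}$; that is exactly the conclusion the paper is after.
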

\begin{proof}
  Using Alexander type Theorem which states that, If $f \in \mathcal{K}(\xi, \gamma,\rho)$ if and only if $zf^{\prime}\in \mathcal{S}(\xi, \gamma,\rho),$ Thus $z+\sum\limits_{n=2}^{\infty}  n a_n z^n $ is in $\mathcal{K}(\xi, \gamma,\rho).$ Hence by wringing $a_{n}$  in Lemma \ref{lem1} by $n a_{n} $ we get the desired result.
\end{proof}

\begin{lemma}\label{lem1a}
A function $f(z)$ of the form (\ref{e1.1}) is in
$\mathcal{S}(\xi, \gamma)$ if
\begin{equation}\label{GMS1a}
\sum\limits_{n=2}^{\infty}   [ (n-1) \sec \xi +
(1-\gamma) ] |a_n| \leq 1-\gamma,
\end{equation}
where $|\xi| < \frac{\pi}{2},$ $0 \leq
\gamma <1.$
\end{lemma}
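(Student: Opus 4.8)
The quickest route is to recognise the statement as the limiting case $\rho=0$ of Lemma~\ref{lem1}. Indeed, Definition~\ref{defa1} is exactly Definition~\ref{defa} with $\rho=0$, so $\mathcal{S}(\xi,\gamma)=\mathcal{S}(\xi,\gamma,0)$. Substituting $\rho=0$ into the coefficient functional of \eqref{GMS1}, the factor $(1-\rho)(n-1)\sec\xi$ reduces to $(n-1)\sec\xi$ and $(1-\gamma)(1+n\rho-\rho)$ reduces to $(1-\gamma)$, so the bracket collapses precisely to $(n-1)\sec\xi+(1-\gamma)$. Hence \eqref{GMS1a} is just \eqref{GMS1} evaluated at $\rho=0$, and the lemma follows immediately from Lemma~\ref{lem1} with no further work.

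Should a self-contained argument be preferred, I would first reduce membership in $\mathcal{S}(\xi,\gamma)$ to a pointwise disk estimate. The plan is to show that
\[
\left|\frac{zf'(z)}{f(z)}-1\right|<(1-\gamma)\cos\xi\qquad(z\in\mathbb{D})
\]
already forces $\mbox{Re}\!\left(e^{i\xi}\frac{zf'(z)}{f(z)}\right)>\gamma\cos\xi$: multiplying by $e^{i\xi}$ (which has modulus $1$) places the value $e^{i\xi}\frac{zf'(z)}{f(z)}$ in the open disk of radius $(1-\gamma)\cos\xi$ centred at $e^{i\xi}$, and since $\mbox{Re}(e^{i\xi})=\cos\xi$ this disk is tangent to, and lies to the right of, the vertical line $\mbox{Re}=\gamma\cos\xi$.

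It then remains to verify the displayed disk estimate from the coefficient hypothesis. Writing $zf'(z)-f(z)=\sum_{n=2}^{\infty}(n-1)a_nz^n$, I would bound the numerator by $\sum_{n=2}^{\infty}(n-1)|a_n||z|^{n}$ and the denominator below by $|z|\bigl(1-\sum_{n=2}^{\infty}|a_n||z|^{n-1}\bigr)$, then let $|z|\to1^{-}$; the ratio is increasing in $|z|$, so its supremum over $\mathbb{D}$ is $\frac{\sum(n-1)|a_n|}{1-\sum|a_n|}$. Requiring this to be at most $(1-\gamma)\cos\xi$ rearranges, after dividing by $\cos\xi$, into exactly \eqref{GMS1a}. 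The only delicate point on this route is the passage $|z|\to1^{-}$ together with the positivity of the denominator, which is legitimate because \eqref{GMS1a} forces $\sum_{n=2}^{\infty}|a_n|\le1$ (every bracket is at least $1-\gamma$). I expect this monotonicity and positivity bookkeeping to be the only genuine obstacle, the remainder being routine.
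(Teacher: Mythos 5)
Your primary argument---specializing Lemma~\ref{lem1} at $\rho=0$, noting $\mathcal{S}(\xi,\gamma)=\mathcal{S}(\xi,\gamma,0)$ so that the bracket $(1-\rho)(n-1)\sec\xi+(1-\gamma)(1+n\rho-\rho)$ collapses to $(n-1)\sec\xi+(1-\gamma)$---is exactly the paper's route: Lemma~\ref{lem1a} is stated there with no separate proof precisely because it is the $\rho=0$ case of Lemma~\ref{lem1}. Your optional self-contained disk-estimate argument is also sound as sketched (the needed positivity of $1-\sum_{n\ge2}|a_n|$ does follow, even strictly, since each bracket in \eqref{GMS1a} is at least $1+(1-\gamma)$), but it goes beyond what the paper does and is not required.
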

\begin{lemma}\label{lem2a}
A function $f(z)$ of the form (\ref{e1.1}) is in
$\mathcal{K}(\xi, \gamma)$ if
\begin{equation}\label{GMS2a}
\sum\limits_{n=2}^{\infty}  n [(n-1) \sec \xi +
(1-\gamma) ] |a_n| \leq 1-\gamma,
\end{equation}
where $|\xi| < \frac{\pi}{2},$  $0 \leq
\gamma <1.$
\end{lemma}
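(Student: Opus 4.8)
The plan is to obtain this convex analogue from its spiral-like counterpart through the Alexander-type correspondence, precisely as Lemma \ref{lem2} was deduced from Lemma \ref{lem1}. The engine of the argument is the relation recorded in the proof of Lemma \ref{lem2}: $f \in \mathcal{K}(\xi,\gamma,\rho)$ if and only if $zf'(z) \in \mathcal{S}(\xi,\gamma,\rho)$. Specializing $\rho=0$ gives the clean statement $f \in \mathcal{K}(\xi,\gamma)$ if and only if $zf'(z) \in \mathcal{S}(\xi,\gamma)$, which is what I intend to exploit.

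First I would set $g(z)=zf'(z)$. For $f$ of the form (\ref{e1.1}) this yields $g(z)=z+\sum_{n=2}^{\infty} n a_n z^n$, so the $n$-th coefficient of $g$ is $n a_n$. Next I would apply Lemma \ref{lem1a} to $g$: the function $g$ lies in $\mathcal{S}(\xi,\gamma)$ provided $\sum_{n=2}^{\infty}[(n-1)\sec\xi+(1-\gamma)]\,n|a_n|\le 1-\gamma$, which is exactly the hypothesis imposed in the statement. By the Alexander correspondence above, $g\in\mathcal{S}(\xi,\gamma)$ is equivalent to $f\in\mathcal{K}(\xi,\gamma)$, and the conclusion follows at once.

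Equivalently, one may bypass $g$ altogether and simply set $\rho=0$ in Lemma \ref{lem2}: the coefficient bound there collapses to $\sum_{n=2}^{\infty} n[(n-1)\sec\xi+(1-\gamma)]|a_n|\le 1-\gamma$, the desired inequality. Since both routes are direct specializations of results already established in the excerpt, no genuine obstacle arises. The only point I would treat with care is the coefficient bookkeeping under the substitution $a_n\mapsto n a_n$: I would check that the factor $n$ multiplies the \emph{entire} bracketed expression $[(n-1)\sec\xi+(1-\gamma)]$ rather than only a part of it, and that the right-hand constant $1-\gamma$ is left untouched by the correspondence. Verifying this substitution explicitly is what turns the two approaches into the single identical inequality claimed.
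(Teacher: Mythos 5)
Your proof is correct and is essentially the paper's own (implicit) derivation: the paper states Lemma \ref{lem2a} without proof as the $\rho=0$ specialization of Lemma \ref{lem2}, which was itself obtained by exactly your Alexander substitution $g(z)=zf'(z)$, i.e.\ replacing $a_n$ by $na_n$ in Lemma \ref{lem1}. The only point worth noting is that Lemma \ref{lem2} as printed misstates the class as $\mathcal{S}(\xi,\gamma,\rho)$ where its proof shows the intended class is $\mathcal{K}(\xi,\gamma,\rho)$ --- a typo your argument implicitly and correctly repairs, since your first route through Lemma \ref{lem1a} does not rely on that statement at all.
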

\begin{definition}
\par A function $f \in \mathcal{S}$ is said to be in the class $\mathcal{R}
^{\tau } (\vartheta, \delta)$, $(\tau \in \mathbb{C} \backslash \{0\},\,\, 0 < \vartheta\leq 1;\delta < 1
)$,  if it satisfies the inequality
\begin{equation*}
  \left| \frac{(1-\vartheta)\frac{f(z)}{z}+\vartheta f^\prime(z)  - 1}
  { 2\tau(1-\delta)+ (1-\vartheta)\frac{f(z)}{z}+\vartheta f^\prime(z)  - 1} \right| < 1 \ \ \ (z \in \mathbb{U}).
\end{equation*}
\par The class $\mathcal{R} ^{\tau } (\vartheta, \delta)$ was introduced earlier by
Swaminathan \cite{swaminathan}(for special cases see the references cited there in).
\end{definition}
\begin{lemma}\cite{swaminathan}\label{lem4} If
$f\in \mathcal{R} ^{\tau } (\vartheta,\delta)$ is of form \eqref{e1.1}, then
\begin{equation}\label{dixcondi}
   \left| a_{n} \right| \leq \frac{2\left| \tau \right|(1-\delta) }{1+\vartheta(n-1)}, \quad
   n \in \mathbb{N} \setminus\{1\}.
\end{equation}
The bounds given in (\ref{dixcondi}) is sharp.
\end{lemma}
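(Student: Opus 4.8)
The plan is to recast the defining inequality as a subordination and then extract the coefficient bound from a classical majorization theorem. First I would introduce the auxiliary function
\[
p(z) := (1-\vartheta)\frac{f(z)}{z} + \vartheta f'(z),
\]
and compute its expansion directly from \eqref{e1.1}, obtaining
\[
p(z) = 1 + \sum_{n=2}^{\infty}\bigl[1+\vartheta(n-1)\bigr]\,a_n\,z^{n-1}.
\]
The membership condition $f\in\mathcal{R}^{\tau}(\vartheta,\delta)$ then reads $\bigl|\frac{p(z)-1}{2\tau(1-\delta)+(p(z)-1)}\bigr|<1$ on $\mathbb{D}$. Since $p(0)-1=0$, the expression inside the modulus defines a Schwarz function $w$ (analytic, $w(0)=0$, $|w(z)|<1$), and solving for $p-1$ gives the identity
\[
p(z)-1 = \frac{2\tau(1-\delta)\,w(z)}{1-w(z)}.
\]

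Next I would recognise the right-hand side as a subordination. Because $\frac{z}{1-z}$ maps $\mathbb{D}$ onto the half-plane $\{\,\Re\,w>-\tfrac12\,\}$, the majorant $h(z):=2\tau(1-\delta)\frac{z}{1-z}$ maps $\mathbb{D}$ onto a rotated and dilated half-plane, hence is convex univalent, with coefficient of $z$ equal to $2\tau(1-\delta)$. Since $w$ is a Schwarz function, the displayed identity is precisely $p(z)-1 \prec h(z)$.

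I would then invoke Rogosinski's majorization theorem: if an analytic function is subordinate to a convex univalent function $h$, each of its Taylor coefficients is bounded in modulus by the modulus of the coefficient of $z$ in $h$. Applying this to $p(z)-1\prec h(z)$ yields
\[
\bigl|\,[1+\vartheta(n-1)]\,a_n\,\bigr| \le 2|\tau|(1-\delta) \qquad (n\ge 2),
\]
and dividing by the positive factor $1+\vartheta(n-1)$ gives exactly the estimate \eqref{dixcondi}.

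Finally, for sharpness I would exhibit an extremal for each $n$ by taking the Schwarz function $w(z)=z^{n-1}$, so that $p(z)-1 = 2\tau(1-\delta)\frac{z^{n-1}}{1-z^{n-1}}$; its coefficient of $z^{n-1}$ is $2\tau(1-\delta)$, forcing $[1+\vartheta(n-1)]a_n=2\tau(1-\delta)$ and hence equality in \eqref{dixcondi}, with the corresponding $f$ recovered by solving the first-order relation defining $p$. The main obstacle is the subordination step itself: one must check carefully that $h$ is genuinely convex univalent, so that Rogosinski's \emph{sharp} bound by $|h'(0)|$ is available (the weaker Schwarz-coefficient estimates would not deliver a bound uniform in $n$), and that $w$ inherits $w(0)=0$, which is what licenses passing from the modulus inequality to the subordination. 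A subtlety worth flagging is that the naive monomial $z+\frac{2\tau(1-\delta)}{1+\vartheta(n-1)}z^n$ is \emph{not} extremal, since its associated $w$ fails $|w|<1$ on part of the disk; the correct extremal is the one generated by $w(z)=z^{n-1}$.
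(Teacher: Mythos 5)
Your proof is correct. Note that the paper does not actually prove this lemma --- it is imported verbatim from Swaminathan \cite{swaminathan} --- and your argument (rewrite the defining inequality as $p(z)-1=2\tau(1-\delta)\,w(z)/(1-w(z))$ with $w$ a Schwarz function, read this as the subordination $p(z)-1\prec 2\tau(1-\delta)\,z/(1-z)$ to a convex half-plane map, and apply Rogosinski's bound by $|h'(0)|$) is essentially the standard proof in that line of literature, including your correct identification of the implicit extremal generated by $w(z)=z^{n-1}$ and your accurate warning that the naive monomial $z+\frac{2\tau(1-\delta)}{1+\vartheta(n-1)}z^{n}$ is \emph{not} in the class, since its associated $w(z)=z^{n-1}/(1+z^{n-1})$ violates $|w|<1$ wherever $\Re\, z^{n-1}<-\tfrac12$. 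The one step you should make explicit is the analyticity of $w$ on all of $\mathbb{D}$: the denominator $2\tau(1-\delta)+(p(z)-1)$ cannot vanish, because at such a point the numerator would equal $-2\tau(1-\delta)\neq 0$ and the modulus in the defining inequality would blow up, contradicting the strict bound; this is precisely what licenses treating $w$ as a genuine Schwarz function before solving for $p-1$.
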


\par A variable $x$ is said to be \emph{Pascal distribution} if it takes the
values $0,1,2,3,\dots $ with probabilities
\newline $(1-q)^{m}$, $\dfrac{qm(1-q)^{m}}{%
1!}$, $\dfrac{q^{2}m(m+1)(1-q)^{m}}{2!}$, $\dfrac{q^{3}m(m+1)(m+2)(1-q)^{m}}{%
3!}$, \dots respectively, where $q$ and $m$ are called the parameters, and
thus
\begin{equation*}
P(x=k)=\binom{k+m-1}{m-1}%
\,q^{k}(1-q)^{m},k=0,1,2,3,\dots .
\end{equation*}
Lately, El-Deeb \cite{pascal}(also see \cite{GMSTB}) introduced a power series whose coefficients are probabilities of Pascal
distribution
\begin{equation*}
\Theta_{q}^{m}(z)=z+\sum\limits_{n=2}^{\infty }\binom{n+m-2}{m-1}
q^{n-1}(1-q)^{m}z^{n},\qquad z\in \mathbb{D}  \label{PHI}
\end{equation*}%
where $m\geq 1;0\leq q\leq 1$ and one can easily verify that the radius of
convergence of above series is infinity by ratio test.
Now, we define the linear operator
\begin{equation*}
\Lambda_{q}^{m}(z):\mathcal{A}\rightarrow \mathcal{A}
\end{equation*}%
defined by the convolution or Hadamard product
\begin{equation*}
\Lambda_{q}^{m}f(z)=\Theta_{q}^{m}(z)\ast
f(z)=z+\sum\limits_{n=2}^{\infty }\binom{n+m-2}{m-1}
q^{n-1}(1-q)^{m}a_{n}z^{n},\qquad z\in \mathbb{D}.  \label{I}
\end{equation*}

Inspired by earlier results on relations between different
subclasses of analytic and univalent functions by using hypergeometric
functions (see for example,\cite{1,fr4,2,8,9,swaminathan}) and by the recent
investigations related with distribution series (see for example, \cite{GMSTB,ou,pascal, fra, mur1, mur2, por1, por3, por2}, we obtain necessary and sufficient condition for the function $\Phi_q^m$ to be in the classes  $\mathcal{S}(\xi, \gamma,\rho)$ and $\mathcal{K}(\xi, \gamma,\rho)$, and information regarding the images of functions belonging in $\mathcal{R} ^{\tau } (\vartheta,\delta)$ by applying convolution operator. Finally, we provide conditions for the integral operator $\mathcal{G}^{m}_{q}(z)=%
\int_{0}^{z}\frac{\Theta_{q}^{m}(t)}{t}dt$ belonging to the above
classes.

\section{The necessary and sufficient conditions}

In order to prove our main results, we will use the following notations,
for $m \geq 1$ and $0 \leq q <1$:
\begin{eqnarray}\label{GMS1}
\sum\limits_{n=0}^{\infty} \binom{n+m-1}{m-1} q^{n} &=& \frac{1}{(1-q)^m};\quad
\sum\limits_{n=0}^{\infty} \binom{n+m}{m} q^{n} = \frac{1}{(1-q)^{m+1}}\nonumber \\ \quad {\text{and}}\quad
\sum\limits_{n=0}^{\infty} \binom{n+m+1}{m+1} q^{n} &=& \frac{1}{(1-q)^{m+2}}.
\end{eqnarray}
By simple computation we get the following relations:
\begin{equation}\label{GMS3}
\sum\limits_{n=2}^{\infty}\binom{n+m-2}{m-1}\,q^{n-1}=\sum\limits_{n=0}^{\infty}\binom{n+m-1}{m-1}\,q^{n}-1 = \frac{1}{(1-q)^m} - 1
\end{equation}
\begin{equation}\label{GMS4}
\sum\limits_{n=2}^{\infty}(n-1) \binom{n+m-2}{m-1}\,q^{n-1}=q m\sum\limits_{n=0}^{\infty}\binom{n+m}{m}\,q^{n} = \frac{q m}{(1-q)^{m+1}}
\end{equation}and
\begin{align}  \label{GMS5}
\sum\limits_{n=2}^{\infty}(n-1)(n-2)\binom{n+m-2}{m-1}\,q^{n-1}&=
q^2~m(m+1)\sum\limits_{n=0}^{\infty}\binom{n+m+1}{m+1}\,q^{n}\nonumber\\
&= \frac{q^2~m(m+1)}{(1-q)^{m+2}}.
\end{align}
\begin{theorem}
\label{th1}If $m>0,$ then $\Theta^m_q(z)\in\mathcal{S}(\xi, \gamma,\rho)$ if
\begin{eqnarray}\label{d2}
&&[(1-\rho)\sec \xi+\rho(1-\gamma)]\frac{q m}{(1-q)^{m+1}}\leq1-\gamma.
 \end{eqnarray}
\end{theorem}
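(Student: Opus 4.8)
The plan is to invoke the sufficient coefficient condition of Lemma \ref{lem1}: to conclude that $\Theta^m_q \in \mathcal{S}(\xi,\gamma,\rho)$ it is enough to verify that the coefficients of $\Theta^m_q$, namely $a_n = \binom{n+m-2}{m-1} q^{n-1}(1-q)^m$ (with $a_1=1$), satisfy $\sum_{n=2}^{\infty}[(1-\rho)(n-1)\sec\xi + (1-\gamma)(1+n\rho-\rho)]\,|a_n| \le 1-\gamma$. So I would substitute these coefficients into the left-hand side of \eqref{GMS1} and reduce the resulting series to the closed-form quantities already recorded in \eqref{GMS3} and \eqref{GMS4}.

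The key algebraic step is to rewrite the bracketed weight as a term linear in $(n-1)$ plus a constant. Since $1+n\rho-\rho = 1+\rho(n-1)$, the weight equals $[(1-\rho)\sec\xi + \rho(1-\gamma)](n-1) + (1-\gamma)$. Pulling out the common factor $(1-q)^m$, the sum then splits as a multiple of $\sum_{n=2}^{\infty}(n-1)\binom{n+m-2}{m-1}q^{n-1}$ plus a multiple of $\sum_{n=2}^{\infty}\binom{n+m-2}{m-1}q^{n-1}$, which by \eqref{GMS4} and \eqref{GMS3} equal $\frac{qm}{(1-q)^{m+1}}$ and $\frac{1}{(1-q)^m}-1$ respectively.

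After performing this substitution and allowing the factor $(1-q)^m$ to cancel, the left-hand side collapses to $[(1-\rho)\sec\xi + \rho(1-\gamma)]\frac{qm}{1-q} + (1-\gamma)\bigl[1-(1-q)^m\bigr]$. Requiring this to be $\le 1-\gamma$ and then subtracting the constant term and dividing through by $(1-q)^m$ produces exactly the hypothesis \eqref{d2}. Thus \eqref{d2} is precisely the condition that makes Lemma \ref{lem1} applicable, and the membership $\Theta^m_q\in\mathcal{S}(\xi,\gamma,\rho)$ follows at once.

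The computation is essentially routine; the only places demanding care are the regrouping of the weight into a part linear in $(n-1)$ and a constant (so that the two tabulated identities apply without further manipulation) and the final cancellation of $(1-q)^m$, where one must track the constant contribution $(1-\gamma)\bigl[1-(1-q)^m\bigr]$ correctly in order to recover the clean form \eqref{d2}.
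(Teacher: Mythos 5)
Your proposal is correct and follows essentially the same route as the paper's own proof: both invoke Lemma \ref{lem1}, regroup the weight via $1+n\rho-\rho = 1+\rho(n-1)$ into the linear part $[(1-\rho)\sec\xi+\rho(1-\gamma)](n-1)$ plus the constant $(1-\gamma)$, evaluate the two resulting series by \eqref{GMS4} and \eqref{GMS3}, and observe that the condition $[(1-\rho)\sec\xi+\rho(1-\gamma)]\frac{qm}{1-q}+(1-\gamma)\left[1-(1-q)^m\right]\leq 1-\gamma$ rearranges exactly to \eqref{d2}. No gaps; your closing remark about tracking the constant term and the cancellation of $(1-q)^m$ matches the paper's final equivalence step.
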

\begin{proof}
Since%
\begin{equation*}
\Theta^m_q(z) = z+\sum\limits_{n=2}^{\infty }\binom{n+m-2}{m-1}
q^{n-1}(1-q)^mz^{n}.  \label{n5}
\end{equation*}%
Using the Lemma \ref{lem1}, it suffices to show that%
\begin{equation}
\sum\limits_{n=2}^{\infty }[ (1-\rho)(n-1) \sec \xi +
(1-\gamma) (1+n\rho -\rho)  ]\leq 1-\gamma .  \label{n1}
\end{equation}%
From (\ref{n1}) we let
\begin{eqnarray*}
M_1(\xi, \gamma,\rho)&=&\sum\limits_{n=2}^{\infty }[ (1-\rho)\sec \xi(n-1) +
(1-\gamma) (1+n\rho -\rho)  ]\binom{n+m-2}{m-1}
q^{n-1}(1-q)^m \\
&=&[(1-\rho)\sec \xi+\rho(1-\gamma)](1-q)^m\sum\limits_{n=2}^{\infty }(n-1)\nonumber\\ &\qquad\times& \binom{n+m-2}{m-1%
} q^{n-1}+(1-\gamma)(1-q)^m\sum\limits_{n=2}^{\infty }\binom{n+m-2}{m-1%
} q^{n-1} \\
&=&[(1-\rho)\sec \xi+\rho(1-\gamma)](1-q)^mq m\sum\limits_{n=0}^{\infty}\binom{n+m}{m}\,q^{n}\\
&\qquad+&(1-\gamma)(1-q)^m\left(\sum\limits_{n=0}^{\infty}\binom{n+m-1}{m-1}\,q^{n}-1\right) \\
&=&[(1-\rho)\sec \xi+\rho(1-\gamma)](1-q)^m\frac{q m}{(1-q)^{m+1}}\\
&\qquad+&(1-\gamma)(1-q)^m\left(\frac{1}{(1-q)^m} - 1\right)\\
&=&[(1-\rho)\sec \xi+\rho(1-\gamma)]\frac{q m}{(1-q)}+(1-\gamma)\left(1-(1-q)^m \right).
\end{eqnarray*}
But $M_1(\xi, \gamma,\rho)$ is bounded above by $1-\gamma$ if and
only if~(\ref{d2}) holds.
\end{proof}
\begin{theorem} \label{th11}If $m>0,$ then $\Theta^m_q(z)\in \mathcal{K}(\xi, \gamma,\rho)$ if
\begin{eqnarray}\label{t2}
&~&[(1-\rho)\sec \xi+(1-\gamma)]\frac{m(m+1)q^{2}}{(1-q)^2}+\left[2(1-\rho)sec\xi+(1-\gamma)(4-\rho)\right] \frac{mq}{1-q}\nonumber\\
&+&\left[(1-\gamma)(2 -\rho)\right]\left(1-(1-q)^m \right)\leq 1-\gamma.
\end{eqnarray}
\end{theorem}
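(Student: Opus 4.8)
The plan is to mirror the proof of Theorem \ref{th1} verbatim, only replacing the starlike-type coefficient criterion by its convex-type analogue. By the Alexander relation $f\in\mathcal{K}(\xi,\gamma,\rho)\iff zf'\in\mathcal{S}(\xi,\gamma,\rho)$, applying Lemma \ref{lem1} to $zf'$ yields the sufficient condition recorded in Lemma \ref{lem2}: $\Theta^m_q\in\mathcal{K}(\xi,\gamma,\rho)$ holds provided
\[
\sum_{n=2}^{\infty} n\big[(1-\rho)(n-1)\sec\xi+(1-\gamma)(1+n\rho-\rho)\big]\binom{n+m-2}{m-1}q^{n-1}(1-q)^m\le 1-\gamma.
\]
Thus the whole task reduces to evaluating this single series in closed form and comparing it with $1-\gamma$.

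First I would simplify the weight so that it lines up with the available summation identities. Writing $1+n\rho-\rho=1+\rho(n-1)$ gives $n(1+n\rho-\rho)=n+\rho\,n(n-1)$, so the bracket times $n$ becomes
\[
(1-\rho)\sec\xi\,n(n-1)+(1-\gamma)\big[n+\rho\,n(n-1)\big].
\]
Since the identities (\ref{GMS3})–(\ref{GMS5}) are expressed through the basis $1,\,(n-1),\,(n-1)(n-2)$, I would substitute $n=(n-1)+1$ and $n(n-1)=(n-1)(n-2)+2(n-1)$. Collecting terms rewrites the weight as a linear combination $c_2\,(n-1)(n-2)+c_1\,(n-1)+c_0$, whose coefficients depend on $(1-\rho)\sec\xi$, $(1-\gamma)$ and $\rho$.

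Next, exactly as in Theorem \ref{th1}, I would carry the factor $(1-q)^m$ through the sum and apply (\ref{GMS5}), (\ref{GMS4}) and (\ref{GMS3}) to the three resulting series. After cancelling the surplus powers of $1-q$ this produces a quantity
\[
M_2(\xi,\gamma,\rho)=c_2\frac{q^2m(m+1)}{(1-q)^2}+c_1\frac{qm}{1-q}+c_0\big(1-(1-q)^m\big),
\]
and requiring $M_2\le 1-\gamma$ should be exactly the asserted inequality (\ref{t2}). Convergence of every series is automatic for $m>0$ and $0\le q<1$, so no analytic subtlety arises.

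The routine but error-prone heart of the argument is the coefficient bookkeeping of the penultimate step: the weight is \emph{quadratic} in $n$, so three basis sums must be tracked simultaneously, and the $\rho$-contribution coming from $\rho\,n(n-1)$ in the $(1-\gamma)$ term must be kept strictly separate from the $(1-\rho)\sec\xi\,n(n-1)$ contribution. This collection of coefficients is the step I expect to be the main obstacle -- conceptually trivial, but the precise place where a stray factor of $\rho$ or a miscounted $(n-1)$ would corrupt the constants in (\ref{t2}). Everything else is a direct transcription of the Theorem \ref{th1} computation carrying the extra factor $n$.
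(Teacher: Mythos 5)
Your overall strategy is the same as the paper's (reduce via Lemma \ref{lem2}, expand the weight in the basis $1$, $(n-1)$, $(n-1)(n-2)$, apply \eqref{GMS3}--\eqref{GMS5}), but your proof as planned does not close, and the failure occurs at exactly the step you flagged as error-prone. Your expansion of the weight is correct: $n\bigl[(1-\rho)(n-1)\sec\xi+(1-\gamma)(1+n\rho-\rho)\bigr]=(1-\rho)\sec\xi\,n(n-1)+(1-\gamma)\bigl[n+\rho\,n(n-1)\bigr]$, and substituting $n(n-1)=(n-1)(n-2)+2(n-1)$, $n=(n-1)+1$ gives the exact coefficients
\begin{equation*}
c_2=(1-\rho)\sec\xi+\rho(1-\gamma),\quad c_1=2(1-\rho)\sec\xi+(1-\gamma)(1+2\rho),\quad c_0=(1-\gamma).
\end{equation*}
These do \emph{not} match the constants in \eqref{t2}, which are $(1-\rho)\sec\xi+(1-\gamma)$, $2(1-\rho)\sec\xi+(1-\gamma)(4-\rho)$, and $(1-\gamma)(2-\rho)$ respectively. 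So your assertion that the evaluated $M_2\le 1-\gamma$ ``should be exactly the asserted inequality \eqref{t2}'' is false; if you carry out your (correct) bookkeeping you will land on a different, in fact sharper, inequality. The reason for the mismatch is that the paper's own proof opens with the expansion $[(1-\rho)\sec\xi+(1-\gamma)]n^2+(1-\rho)[(1-\gamma)-\sec\xi]n$, whose $n^2$-coefficient should be $(1-\rho)\sec\xi+\rho(1-\gamma)$; the constants of \eqref{t2} are propagated consistently from that initial slip (the discrepancy is visible already at $\rho=0$, where the exact $c_2$ is $\sec\xi$, not $\sec\xi+(1-\gamma)$ as in Corollary \ref{pc-cor2}).

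The gap is repairable with one additional observation, which your write-up lacks: since $0\le\rho<1$ and $0\le\gamma<1$, each coefficient in \eqref{t2} dominates the corresponding exact coefficient termwise, because $(1-\gamma)-\rho(1-\gamma)=(1-\rho)(1-\gamma)\ge 0$, $(4-\rho)-(1+2\rho)=3(1-\rho)\ge 0$, and $(2-\rho)-1=1-\rho\ge 0$, while the three quantities $q^2m(m+1)/(1-q)^2$, $mq/(1-q)$, $1-(1-q)^m$ are nonnegative. Hence \eqref{t2} implies your exact condition $M_2\le 1-\gamma$, which by Lemma \ref{lem2} yields $\Theta^m_q\in\mathcal{K}(\xi,\gamma,\rho)$; the theorem is therefore true as a sufficiency statement, but only a fortiori. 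Without this domination argument (or, alternatively, without stating and proving the sharper inequality your computation actually produces), the final identification step in your plan is simply wrong, and the proof does not reach \eqref{t2}.
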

\begin{proof}
In view of Lemma \ref{lem2}, we have to show that%
\begin{equation}
\sum\limits_{n=2}^{\infty }n [ (1-\rho)(n-1) \sec \xi +
(1-\gamma) (1+n\rho -\rho)  ]\binom{n+m-2}{m-1}
q^{n-1}(1-q)^m\leq 1-\gamma .  \label{we}
\end{equation}%
Writing  $n=(n-1)+1$ and
$n^{2}=(n-1)(n-2)+3(n-1)+1.$
\\From (\ref{we}), consider the expression
\begin{eqnarray*}
M_2(\xi, \gamma,\rho)&=&\sum\limits_{n=2}^{\infty}  n [ (1-\rho)(n-1) \sec \xi +
(1-\gamma) (1+n\rho -\rho)  ]\\&\qquad\times&\binom{n+m-2}{m-1}%
 q^{n-1}(1-q)^m \\
&=&[(1-\rho)\sec \xi+(1-\gamma)](1-q)^m\sum\limits_{n=2}^{\infty }n^2\binom{n+m-2}{m-1%
} q^{n-1}\\&\qquad-&(1-\rho)[sec\xi-(1-\gamma)(1-q)^m\sum\limits_{n=2}^{\infty }n\binom{n+m-2}{m-1%
} q^{n-1}\\
&=&[(1-\rho)\sec \xi+(1-\gamma)](1-q)^m\sum\limits_{n=2}^{\infty }(n-1)(n-2)\binom{n+m-2}{m-1}q^{n-1}\\
&\qquad+&\left[2(1-\rho)sec\xi+(1-\gamma)(4-\rho)\right](1-q)^m\sum\limits_{n=2}^{\infty }(n-1)\binom{n+m-2}{m-1}q^{n-1}\\
&\qquad+&\left[(1-\gamma)(2 -\rho)\right](1-q)^m\sum\limits_{n=2}^{\infty }\binom{n+m-2}{m-1}q^{n-1}\\
&=&[(1-\rho)\sec \xi+(1-\gamma)](1-q)^mq^{2}m(m+1)\sum\limits_{n=0}^{\infty }\binom{n+m+1}{m+1}q^{n}\\
&\qquad+&\left[2(1-\rho)sec\xi+(1-\gamma)(4-\rho)\right](1-q)^m mq\sum\limits_{n=0}^{\infty }\binom{n+m}{m}q^{n}
\end{eqnarray*}
\begin{eqnarray*}
&+&\left[(1-\gamma)(2 -\rho)\right](1-q)^m\left[\sum\limits_{n=0}^{\infty }\binom{n+m-1}{m-1}q^{n}-1\right].
 \end{eqnarray*}
 Now by using \eqref{GMS3}-\eqref{GMS5}, we get
 \begin{eqnarray*}
 M_2(\xi, \gamma,\rho)&=&[(1-\rho)\sec \xi+(1-\gamma)]\frac{m(m+1)q^{2}}{(1-q)^2}\\
&\qquad+&\left[2(1-\rho)sec\xi+(1-\gamma)(4-\rho)\right] \frac{mq}{1-q}\\
&\qquad+&\left[(1-\gamma)(2 -\rho)\right]\left(1-(1-q)^m \right).
 \end{eqnarray*}
Hence, $M_2(\xi, \gamma,\rho)$ is bounded above by $1-\gamma$if (\ref{t2}) is satisfied.
\end{proof}

\section{Inclusion Properties}
Making use of the Lemma \ref{lem4}, we will focus the influence of the Pascal distribution series on the classes $\mathcal{S}(\xi, \gamma,\rho)$ and $\mathcal{K}(\xi, \gamma,\rho)$.
\begin{theorem}\label{th2}If $\ f\in \mathcal{R} ^{\tau } (\vartheta,\delta)$ then $\Lambda_q^mf(z)$
is in $\mathcal{S}(\xi, \gamma,\rho)$ if and only if
\begin{eqnarray}\label{d3}
&&\frac{2\left| \tau \right|(1-\delta) }{\vartheta} \left\{\frac{}{}\left[(1-\rho)sec\xi+\rho(1-\gamma)\right]\left[1-(1-q)^{m}\right]\nonumber\right.\\
&+&\left.\frac{(1-\rho)(1-\gamma-sec\xi)}{q(m-1)}\left[(1-q)-(1-q)^m-q(m-1)(1-q)^m\right]\right\}\leq1-\gamma.
 \end{eqnarray}
\end{theorem}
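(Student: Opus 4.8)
The plan is to read the asserted equivalence as a statement about a single computable majorant. Writing $A=(1-\rho)\sec\xi+\rho(1-\gamma)$ and $B=1-\gamma$, I first observe that the summand in Lemma \ref{lem1} factors as $(1-\rho)(n-1)\sec\xi+(1-\gamma)(1+n\rho-\rho)=(n-1)A+B$. Applying Lemma \ref{lem1} to $g=\Lambda_q^m f$, whose coefficients are $c_n=\binom{n+m-2}{m-1}q^{n-1}(1-q)^m a_n$, reduces the membership $\Lambda_q^m f\in\mathcal{S}(\xi,\gamma,\rho)$ to the bound $W:=\sum_{n\ge2}[(n-1)A+B]\,|c_n|\le1-\gamma$. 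For the forward direction I would insert the sharp estimate $|a_n|\le 2|\tau|(1-\delta)/(1+\vartheta(n-1))$ of Lemma \ref{lem4}, so that $W$ is majorised by $2|\tau|(1-\delta)(1-q)^m\sum_{n\ge2}\frac{(n-1)A+B}{1+\vartheta(n-1)}\binom{n+m-2}{m-1}q^{n-1}$.

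The algebraic heart of the argument is the rewriting $(n-1)A+B=An+(B-A)$. Because $0<\vartheta\le1$, one has $\frac{n}{1+\vartheta(n-1)}\le\frac1\vartheta$ and $\frac1{1+\vartheta(n-1)}\le\frac1{\vartheta n}$; applying the first bound to the $An$ piece and the second to the $(B-A)$ piece eliminates all $\vartheta$-dependence save the overall factor $1/\vartheta$. I would then lower the binomial order through the identity $\frac1n\binom{n+m-2}{m-1}=\frac1{m-1}\binom{n+m-2}{m-2}$ (valid for $m>1$; the case $m=1$ is handled directly), and evaluate the two remaining series by \eqref{GMS3} and by the auxiliary identity $(1-q)^m\sum_{n\ge2}\binom{n+m-2}{m-2}q^{n-1}=\frac1q\big[(1-q)-(1-q)^m-q(m-1)(1-q)^m\big]$, itself a consequence of the generating function $\sum_{n\ge0}\binom{n+m-2}{m-2}q^{n}=(1-q)^{-(m-1)}$. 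Collecting the two contributions reproduces the left-hand side of \eqref{d3}, so \eqref{d3} yields $W\le1-\gamma$ and hence the desired membership.

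Since the threshold \eqref{d3} depends only on the parameters and not on the individual $f$, the equivalence is naturally read as: $\Lambda_q^m f\in\mathcal{S}(\xi,\gamma,\rho)$ for every $f\in\mathcal{R}^\tau(\vartheta,\delta)$ precisely when \eqref{d3} holds. For the converse I would exploit the sharpness recorded in Lemma \ref{lem4}: let $f_0\in\mathcal{R}^\tau(\vartheta,\delta)$ be an extremal function for which $|a_n|=2|\tau|(1-\delta)/(1+\vartheta(n-1))$ is attained. For $f_0$ the chain of estimates above should collapse to equalities, so that $W(\Lambda_q^m f_0)$ equals the left-hand side of \eqref{d3}; if \eqref{d3} fails then $W(\Lambda_q^m f_0)>1-\gamma$, and one would conclude $\Lambda_q^m f_0\notin\mathcal{S}(\xi,\gamma,\rho)$, so the membership cannot hold for all admissible $f$.

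I expect this converse to be the main obstacle, on two counts. First, Lemma \ref{lem1} provides only a sufficient coefficient condition for $\mathcal{S}(\xi,\gamma,\rho)$, so $W>1-\gamma$ does not by itself certify non-membership; to make the necessity rigorous I would need a necessary coefficient characterisation for the extremal function, which is plausible because its coefficients are real and of fixed sign. Second, the bounds $\frac{n}{1+\vartheta(n-1)}\le\frac1\vartheta$ and $\frac1{1+\vartheta(n-1)}\le\frac1{\vartheta n}$ are strict for $n\ge2$ when $\vartheta<1$, and the sign $B-A=(1-\rho)(1-\gamma-\sec\xi)\le0$ makes the second bound act in the unfavourable direction; consequently the closed form in \eqref{d3} is a priori only a majorant of the true value of $W$, and I would have to argue that it is approached as a genuine threshold --- for instance by passing to the limit $\vartheta\to1$ --- rather than a strict over-estimate.
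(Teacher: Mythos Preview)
Your sufficiency argument is the paper's argument, but you have made it harder than necessary. The paper applies the single estimate $1+\vartheta(n-1)\ge\vartheta n$ to the entire nonnegative summand $(n-1)A+B$ \emph{before} splitting, obtaining
\[
\frac{(n-1)A+B}{1+\vartheta(n-1)}\le\frac{(n-1)A+B}{\vartheta n}=\frac{1}{\vartheta}\Bigl[A+\frac{B-A}{n}\Bigr],
\]
which is legitimate because $(n-1)A+B\ge0$. Your decision to write $(n-1)A+B=An+(B-A)$ first and then bound the two pieces separately is precisely what creates the sign obstruction you worry about in your last paragraph; it is an artefact of the order of operations, not a genuine difficulty. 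After this point your evaluation of the two series via \eqref{GMS3} and the identity $\tfrac{1}{n}\binom{n+m-2}{m-1}=\tfrac{1}{m-1}\binom{n+m-2}{m-2}$ matches the paper exactly.

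On the converse: your scepticism is warranted. The paper's proof is in fact only a proof of sufficiency; its closing sentence is ``$M_3(\xi,\gamma,\rho)$ is bounded by $1-\gamma$, if (\ref{d3}) holds,'' and no argument for necessity is offered. Both obstacles you identify --- that Lemma~\ref{lem1} gives only a sufficient coefficient condition, and that the inequality $1+\vartheta(n-1)\ge\vartheta n$ is strict for $\vartheta<1$ --- are real and are not addressed in the paper. So the ``only if'' in the statement is not established there either.
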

\begin{proof}
In view of Lemma \ref{lem1}, it is required to show that%
\begin{equation*}
\sum\limits_{n=2}^{\infty }[ (1-\rho)(n-1) \sec \xi +
(1-\gamma) (1+n\rho -\rho)  ]\binom{n+m-2}{m-1}
q^{n-1}(1-q)^m\vert  a_{n}\vert\leq 1-\gamma  .
\end{equation*}

Let
\begin{eqnarray*}
M_3(\xi, \gamma,\rho)&=&\sum\limits_{n=2}^{\infty }[ (1-\rho)(n-1) \sec \xi +
(1-\gamma) (1+n\rho -\rho)  ]\\&\qquad\times&\binom{n+m-2}{m-1}
q^{n-1}(1-q)^m\vert a_{n}\vert. \end{eqnarray*}Since $f\in \mathcal{R} ^{\tau } (\vartheta,\delta),$ then by Lemma \ref{lem4}, we have%
\begin{equation*}
 \left| a_{n} \right| \leq \frac{2\left| \tau \right|(1-\delta) }{1+\vartheta(n-1)}, \quad
   n \in \mathbb{N} \setminus\{1\}
\end{equation*}
and $1 + \vartheta (n - 1)\geq \vartheta n.$ Thus,we have
\begin{eqnarray*}
M_3(\xi, \gamma,\rho)&\leq &\frac{2\left| \tau \right|(1-\delta) }{\vartheta} \left[ \sum\limits_{n=2}^{\infty }\frac{1}{n}[ (1-\rho)(n-1) \sec \xi +
(1-\gamma) (1+n\rho -\rho)  ] \right.\\
&\qquad\times&\left.\binom{n+m-2}{m-1}q^{n-1}(1-q)^m\right]\\
&=&\frac{2\left| \tau \right|(1-\delta) }{\vartheta}(1-q)^m\left[\sum\limits_{n=2}^{\infty }\left[(1-\rho)sec\xi+\rho(1-\gamma)\right] \right.\\
&\qquad+&\left.\left.(1-\rho)(1-\gamma-sec\xi)\frac{1}{n}\right]
\binom{n+m-2}{m-1}q^{n-1}\right].\end{eqnarray*}
Using \eqref{GMS3}, we get
\begin{eqnarray*}
M_3(\xi, \gamma,\rho)&=&\frac{2\left| \tau \right|(1-\delta) }{\vartheta}(1-q)^m \left\{\left[(1-\rho)sec\xi+\rho(1-\gamma)\right]\left[\sum_{n=0}^{\infty}\binom{n+m-1}{m-1}
\,q^{n}-1\right]\right.\\
&\qquad+&\left.\frac{(1-\rho)(1-\gamma)}{q(m-1)}\left[\sum_{n=0}^{\infty}
\binom{n+m-2}{m-2}\,q^{n}-1-(m-1)q\right]\right\}\\
&=&\frac{2\left| \tau \right|(1-\delta) }{\vartheta} \left\{\frac{}{}\left[(1-\rho)sec\xi+\rho(1-\gamma)\right]\left[1-(1-q)^{m}\right]\right.\\
&\qquad+&\left.\frac{(1-\rho)(1-\gamma-sec\xi)}{q(m-1)}\left[(1-q)-(1-q)^m-q(m-1)(1-q)^m\right]\right\}.\\
\end{eqnarray*}
But $M_3(\xi, \gamma,\rho)$ is bounded by $1-\gamma$, if (\ref{d3})
holds. This completes the proof of Theorem \ref{th2}.
\end{proof}
Applying Lemma \ref{lem2} and using the same technique as in the proof of Theorem \ref{th11}, we have the following result.
\begin{theorem}
\label{th2a}If $\ f\in \mathcal{R} ^{\tau } (\vartheta,\delta),\ $ then $\Lambda_q^mf(z)$
is in $\mathcal{K}(\xi, \gamma,\rho)$ if and only if
\begin{eqnarray*}
&&\frac{2\left| \tau \right|(1-\delta) }{\vartheta} \left[[(1-\rho)\sec \xi+(1-\gamma)]\frac{m(m+1)q^{2}}{(1-q)^2}\right.\nonumber\\&+&\left.\left[2(1-\rho)sec\xi+(1-\gamma)(4-\rho)\right] \frac{mq}{1-q}+\left[(1-\gamma)(2 -\rho)\right]\left(1-(1-q)^m \right)\frac{}{}\right]\leq 1-\gamma .
\end{eqnarray*}
\end{theorem}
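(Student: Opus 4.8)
The plan is to verify the sufficient coefficient criterion of Lemma \ref{lem2} for the convolution $\Lambda_q^m f$, in exactly the way the membership of $\Theta_q^m$ in $\mathcal{K}(\xi,\gamma,\rho)$ was established in Theorem \ref{th11}; the only genuinely new ingredient is the coefficient estimate available for $f\in\mathcal{R}^{\tau}(\vartheta,\delta)$. Writing $\Lambda_q^m f(z)=z+\sum_{n=2}^{\infty}\binom{n+m-2}{m-1}q^{n-1}(1-q)^m a_n z^n$, Lemma \ref{lem2} guarantees $\Lambda_q^m f\in\mathcal{K}(\xi,\gamma,\rho)$ as soon as
\begin{equation*}
\sum_{n=2}^{\infty} n\,[(1-\rho)(n-1)\sec\xi+(1-\gamma)(1+n\rho-\rho)]\binom{n+m-2}{m-1}q^{n-1}(1-q)^m|a_n|\le 1-\gamma .
\end{equation*}
Thus the entire problem reduces to bounding this one series.

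First I would invoke Lemma \ref{lem4}, which supplies $|a_n|\le \frac{2|\tau|(1-\delta)}{1+\vartheta(n-1)}$. Since $0<\vartheta\le 1$ we have $1+\vartheta(n-1)\ge 1\ge\vartheta$, so $|a_n|\le \frac{2|\tau|(1-\delta)}{\vartheta}$ for every $n\ge 2$. Inserting this bound pulls the constant $\frac{2|\tau|(1-\delta)}{\vartheta}$ outside the summation and leaves precisely the series $M_2(\xi,\gamma,\rho)$ already evaluated in the proof of Theorem \ref{th11}. From there I would reuse that computation verbatim: decompose $n=(n-1)+1$ and $n^2=(n-1)(n-2)+3(n-1)+1$, regroup the summand into multiples of $(n-1)(n-2)$, $(n-1)$ and $1$ weighted against $\binom{n+m-2}{m-1}q^{n-1}$, and apply the closed forms \eqref{GMS3}--\eqref{GMS5}. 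This produces the bracketed closed-form value of $M_2$, and the estimate is dominated by $1-\gamma$ exactly when the displayed inequality of the theorem holds.

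The step I expect to demand the most care is the choice of coefficient estimate, because it dictates which closed form one reaches. If one instead adopts the sharper bound used in Theorem \ref{th2}, namely $1+\vartheta(n-1)\ge\vartheta n$ and hence $|a_n|\le\frac{2|\tau|(1-\delta)}{\vartheta n}$, then the factor $\frac1n$ annihilates the leading $n$ coming from Lemma \ref{lem2}, and the residual series collapses to the Theorem \ref{th1} sum $M_1$ rather than to $M_2$; to land on the $M_2$ form asserted here one must deliberately retain the coarser bound $\frac{1}{1+\vartheta(n-1)}\le\frac1\vartheta$. I would also note that this route proves only the sufficiency (the ``if'') direction, since Lemma \ref{lem2} is a one-sided criterion and Lemma \ref{lem4} furnishes merely an upper bound on $|a_n|$; the converse is never invoked, so the statement is most accurately read with ``if and only if'' replaced by ``if.''
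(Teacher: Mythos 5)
Your proposal is correct and is essentially the paper's own argument: the paper omits the proof entirely, remarking only that Lemma \ref{lem2} combined with the technique of Theorem \ref{th11} yields the result, which is precisely your route of extracting the constant $\frac{2|\tau|(1-\delta)}{\vartheta}$ via the coarse estimate $1+\vartheta(n-1)\ge\vartheta$ and then reusing the closed-form evaluation of $M_2(\xi,\gamma,\rho)$ from \eqref{GMS3}--\eqref{GMS5}. Your two caveats are also well taken: only the ``if'' direction is actually established (Lemma \ref{lem2} is one-sided and Lemma \ref{lem4} gives only an upper bound, so the ``if and only if'' in the statement is an overstatement the paper repeats throughout), and had the paper instead used the sharper bound $1+\vartheta(n-1)\ge\vartheta n$ as in Theorem \ref{th2}, the factor $n$ would cancel and one would obtain the strictly weaker (hence better) sufficient condition $\frac{2|\tau|(1-\delta)}{\vartheta}\,M_1(\xi,\gamma,\rho)\le 1-\gamma$, so the condition as stated is not sharp.
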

\section{An integral operator}

\begin{theorem}\label{th3}
If the function $\mathcal{G}^{m}_{q}(z)$ is given by
\begin{equation}\label{io1}
\mathcal{G}^{m}_{q}(z)=\int_{0}^{z}\frac{\Theta_{q}^{m}(t)}{t}dt, \,\,\,z \in \mathbb{D}
\end{equation}%
then $\mathcal{G}^{m}_{q}(z) \in \mathcal{K}(\xi, \gamma,\rho)$ if and only if
\begin{eqnarray*}
[(1-\rho)\sec \xi+\rho(1-\gamma)]\frac{q m}{(1-q)^{m+1}}\leq1-\gamma.
\end{eqnarray*}
\end{theorem}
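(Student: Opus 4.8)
The plan is to reduce the claim to Theorem~\ref{th1} by means of the Alexander-type relation already used in the proof of Lemma~\ref{lem2}. First I would differentiate \eqref{io1}: by the fundamental theorem of calculus $(\mathcal{G}^m_q)'(z)=\Theta^m_q(z)/z$, so that
\[
z\,(\mathcal{G}^m_q)'(z)=\Theta^m_q(z).
\]
That is, the derivative transform $z(\mathcal{G}^m_q)'$ is exactly $\Theta^m_q$. Since the Alexander-type theorem asserts $g\in\mathcal{K}(\xi,\gamma,\rho)$ if and only if $zg'\in\mathcal{S}(\xi,\gamma,\rho)$, I would conclude that $\mathcal{G}^m_q\in\mathcal{K}(\xi,\gamma,\rho)$ precisely when $\Theta^m_q\in\mathcal{S}(\xi,\gamma,\rho)$. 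The desired inequality is then nothing but the condition \eqref{d2} of Theorem~\ref{th1}, so the argument closes at this level.

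For a self-contained verification I would instead argue from the coefficient criterion. Integrating the series for $\Theta^m_q$ term by term gives
\[
\mathcal{G}^m_q(z)=z+\sum_{n=2}^{\infty}\frac{1}{n}\binom{n+m-2}{m-1}q^{n-1}(1-q)^m z^n,
\]
so the coefficients are $a_n=\frac{1}{n}\binom{n+m-2}{m-1}q^{n-1}(1-q)^m$. Feeding these into the membership test of Lemma~\ref{lem2} for $\mathcal{K}(\xi,\gamma,\rho)$, the factor $n$ produced by that lemma cancels against the $1/n$ carried by $a_n$. The resulting series then coincides verbatim with the quantity $M_1(\xi,\gamma,\rho)$ summed in the proof of Theorem~\ref{th1}; reusing the identities \eqref{GMS3}--\eqref{GMS4} I would obtain
\[
M_1(\xi,\gamma,\rho)=[(1-\rho)\sec\xi+\rho(1-\gamma)]\frac{qm}{1-q}+(1-\gamma)\bigl(1-(1-q)^m\bigr),
\]
and a one-line rearrangement shows that $M_1(\xi,\gamma,\rho)\le 1-\gamma$ is equivalent to the stated inequality.

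I do not expect any genuine analytic obstacle: the theorem is essentially Theorem~\ref{th1} in disguise, the cancellation of $n$ against $1/n$ (equivalently, the identity $z(\mathcal{G}^m_q)'=\Theta^m_q$) being the whole point. The one place demanding care is the ``if and only if''. The equivalence I would actually establish is between the closed-form inequality and the bound $M_1\le 1-\gamma$ on the coefficient functional, which is exact because the reduction is an identity rather than an estimate. Upgrading this to an ``if and only if'' for genuine class membership relies on the fact that $\Theta^m_q$, and hence $\mathcal{G}^m_q$, has nonnegative Taylor coefficients, for which the criterion of Lemma~\ref{lem2} is necessary as well as sufficient; I would flag this as the step to justify with care, since Theorem~\ref{th1} itself is phrased only as a sufficiency result.
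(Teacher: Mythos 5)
Your proposal is correct and takes essentially the same route as the paper: the paper's own proof likewise substitutes the integrated coefficients $a_n=\tfrac{1}{n}\binom{n+m-2}{m-1}q^{n-1}(1-q)^m$ into Lemma~\ref{lem2}, cancels the factor $n$ against $1/n$, and reduces to the computation of Theorem~\ref{th1} (your Alexander-relation framing via $z(\mathcal{G}^m_q)'=\Theta^m_q$ is the same idea in different clothing, since the paper derives Lemma~\ref{lem2} itself from that relation). Your closing caveat is apt: the paper's proof, exactly like yours, establishes only the sufficiency direction, so the ``if and only if'' in the statement is not actually justified there either.
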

\begin{proof}
Since%
\begin{equation*}
\mathcal{G}^{m}_{q}(z)=z+\sum_{n=2}^{\infty}\binom{n+m-2}{m-1}%
\,q^{n-1}(1-q)^m\frac{z^n}{n}
\end{equation*}%
then by Lemma \ref{lem2}, we need only to verify that%
\begin{equation*}
\sum\limits_{n=2}^{\infty }n[ (1-\rho)(n-1) \sec \xi +
(1-\gamma) (1+n\rho -\rho)  ]\times\frac{1}{n}\binom{%
n+m-2}{m-1}\,q^{n-1}(1-q)^m\leq 1-\gamma ,
\end{equation*}

or, equivalently%
\begin{equation*}
\sum\limits_{n=2}^{\infty }[ (1-\rho)(n-1) \sec \xi +
(1-\gamma) (1+n\rho -\rho)  ]\binom{n+m-2}{m-1}
q^{n-1}(1-q)^m\leq 1-\gamma .  \label{gg}
\end{equation*}
The remaining part of the proof of Theorem \ref{th3} is similar to that of
Theorem \ref{th1}, and so we omit the details.
\end{proof}

\begin{theorem}
\label{th6}If $m>0,$ then the integral operator $\mathcal{G}^{m}_{q}$%
given by (\ref{io1}) is in $\mathcal{S}(\xi, \gamma,\rho)$ if and only if
~%
\begin{eqnarray*}
&& \frac{}{}\left[(1-\rho)sec\xi+\rho(1-\gamma)\right]\left[1-(1-q)^{m}\right]\\
&+&\frac{(1-\rho)(1-\gamma-sec\xi)}{q(m-1)}\left[(1-q)-(1-q)^m-q(m-1)(1-q)^m\right]\leq1-\gamma.
\end{eqnarray*}
\end{theorem}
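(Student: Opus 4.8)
The plan is to follow the proof of Theorem~\ref{th2} almost verbatim, the only difference being that the factor $1/n$ which appeared there through the coefficient bound of Lemma~\ref{lem4} is now built into $\mathcal{G}^m_q$ itself. First I would use the expansion already obtained in the proof of Theorem~\ref{th3}, so that the $n$-th coefficient of $\mathcal{G}^m_q$ is $a_n=\frac{1}{n}\binom{n+m-2}{m-1}q^{n-1}(1-q)^m$. By Lemma~\ref{lem1} membership in $\mathcal{S}(\xi,\gamma,\rho)$ is guaranteed once one shows
\[
M(\xi,\gamma,\rho):=\sum_{n=2}^{\infty}\frac{1}{n}\bigl[(1-\rho)(n-1)\sec\xi+(1-\gamma)(1+n\rho-\rho)\bigr]\binom{n+m-2}{m-1}q^{n-1}(1-q)^m\le 1-\gamma,
\]
so the whole task reduces to evaluating $M(\xi,\gamma,\rho)$ in closed form.

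The key algebraic manipulation is the partial-fraction-type splitting
\[
\frac{1}{n}\bigl[(1-\rho)(n-1)\sec\xi+(1-\gamma)(1+n\rho-\rho)\bigr]=\bigl[(1-\rho)\sec\xi+\rho(1-\gamma)\bigr]+\frac{(1-\rho)(1-\gamma-\sec\xi)}{n},
\]
which breaks $M$ into a sum free of $1/n$ and a sum carrying $1/n$. The first of these is evaluated at once by \eqref{GMS3} and contributes exactly $[(1-\rho)\sec\xi+\rho(1-\gamma)]\,[1-(1-q)^m]$. For the second I would exploit the index-lowering identity $\frac{1}{n}\binom{n+m-2}{m-1}=\frac{1}{m-1}\binom{n+m-2}{m-2}$ (valid since $m>1$), turning the $1/n$-sum into a Pascal series of order $m-1$; summing it via the negative-binomial identity $\sum_{n=0}^{\infty}\binom{n+m-2}{m-2}q^n=(1-q)^{-(m-1)}$ and subtracting the two leading terms $1$ and $(m-1)q$ gives
\[
(1-q)^m(1-\rho)(1-\gamma-\sec\xi)\sum_{n=2}^{\infty}\frac{1}{n}\binom{n+m-2}{m-1}q^{n-1}=\frac{(1-\rho)(1-\gamma-\sec\xi)}{q(m-1)}\bigl[(1-q)-(1-q)^m-q(m-1)(1-q)^m\bigr].
\]
Adding the two contributions reproduces precisely the left-hand side of the inequality in the statement.

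The main obstacle is purely computational and lies entirely in the second sum: one must apply the index-lowering identity, shift the summation index, and carefully peel off the $n=0$ and $n=1$ terms ($\binom{m-2}{m-2}=1$ and $\binom{m-1}{m-2}=m-1$) of the order-$(m-1)$ series, while tracking the interplay of the prefactor $(1-q)^m$ with $(1-q)^{-(m-1)}$. Conceptually there is nothing new beyond Theorem~\ref{th2}: the computation is identical except that the prefactor $2|\tau|(1-\delta)/\vartheta$ is absent. In fact, since the coefficients of $\mathcal{G}^m_q$ equal---rather than merely bound---the quantities summed, $M(\xi,\gamma,\rho)$ is obtained as an exact identity, so the stated inequality is exactly the coefficient condition of Lemma~\ref{lem1} and the equivalence is immediate.
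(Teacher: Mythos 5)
Your proof is correct and is essentially the paper's own: the same reduction via Lemma~\ref{lem1}, the same splitting $\frac{1}{n}\bigl[(1-\rho)(n-1)\sec\xi+(1-\gamma)(1+n\rho-\rho)\bigr]=\bigl[(1-\rho)\sec\xi+\rho(1-\gamma)\bigr]+\frac{(1-\rho)(1-\gamma-\sec\xi)}{n}$, and the same evaluation through \eqref{GMS3} together with the index-lowering identity $\frac{1}{n}\binom{n+m-2}{m-1}=\frac{1}{m-1}\binom{n+m-2}{m-2}$ and the order-$(m-1)$ negative-binomial series (the paper's proof likewise just mirrors Theorem~\ref{th2} with the factor $2|\tau|(1-\delta)/\vartheta$ removed). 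The only caveats --- both shared with the paper itself --- are that Lemma~\ref{lem1} is a sufficiency criterion only, so the ``only if'' half of the statement is not actually delivered by this argument despite your closing claim that the equivalence is immediate, and that the division by $m-1$ requires $m>1$ rather than the stated $m>0$.
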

\begin{proof}
Since%
\begin{equation*}
\mathcal{G}^{m}_{q}(z)=z+\sum_{n=2}^{\infty}\binom{n+m-2}{m-1}%
\,q^{n-1}(1-q)^m\frac{z^n}{n}
\end{equation*}%
then by Lemma \ref{lem1}, we need only to verify that%
\begin{equation*}
\sum\limits_{n=2}^{\infty }\frac{ 1}{n}[ (1-\rho)(n-1) \sec \xi +
(1-\gamma) (1+n\rho -\rho)  ]\binom{n+m-2}{m-1}
q^{n-1}(1-q)^m\leq 1-\gamma  .
\end{equation*}
Thus, we have
\begin{eqnarray*}
M_4(\xi, \gamma,\rho)&=&\sum\limits_{n=2}^{\infty }\frac{1}{n}[ (1-\rho)(n-1) \sec \xi +
(1-\gamma) (1+n\rho -\rho)  ]\\
&\qquad\times&.\binom{n+m-2}{m-1}q^{n-1}(1-q)^m\\
&=&(1-q)^m\left[\sum\limits_{n=2}^{\infty }\left[(1-\rho)sec\xi+\rho(1-\gamma)\right] \right.\\
&\qquad+&\left.\left.(1-\rho)(1-\gamma-sec\xi)\frac{1}{n}\right]\binom{n+m-2}{m-1}q^{n-1}\right].\end{eqnarray*}
Using \eqref{GMS3}, we get
\begin{eqnarray*}
M_4(\xi, \gamma,\rho)&=&(1-q)^m \left\{\left[(1-\rho)sec\xi+\rho(1-\gamma)\right]\left[\sum_{n=0}^{\infty}\binom{n+m-1}{m-1}
\,q^{n}-1\right]\right.\\
&+&\left.\frac{(1-\rho)(1-\gamma-sec\xi)}{q(m-1)}\left[\sum_{n=0}^{\infty}
\binom{n+m-2}{m-2}\,q^{n}-1-(m-1)q\right]\right\}\\
&=& \left\{\frac{}{}\left[(1-\rho)sec\xi+\rho(1-\gamma)\right]\left[1-(1-q)^{m}\right]\right.\\
&+&\left.\frac{(1-\rho)(1-\gamma-sec\xi)}{q(m-1)}\left[(1-q)-(1-q)^m-q(m-1)(1-q)^m\right]\right\}.\\
\end{eqnarray*}
But $M_4(\xi, \gamma,\rho)$ is bounded by $1-\gamma$, if (\ref{d3})
holds. This completes the proof of Theorem \ref{th2}.
\end{proof}
The proof of Theorem \ref{th6} is similar to that of Theorem \ref%
{th3}, so we omitted the proof of Theorem \ref{th6}.
\section{Corollaries and consequences}
By taking $\rho=0$ in Theorems \ref{th1}-\ref{th6}, we
obtain the necessary and sufficient condition for Pascal distribution series be in the classes $\mathcal{S}(\xi, \gamma)$ and $\mathcal{K}(\xi, \gamma)$ from the following corollaries.

\begin{corollary}\label{pc-cor1}
If $m>0,$ then $\Theta^{m}_{q}$ is in $\mathcal{S}(\xi, \gamma)$ if and only if
\begin{eqnarray*}
\frac{q m\sec \xi}{(1-q)^{m+1}}\leq1-\gamma.
 \end{eqnarray*}
\end{corollary}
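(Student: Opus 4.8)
The plan is to derive this corollary as the special case $\rho = 0$ of Theorem \ref{th1}. The section heading already signals this: by Definition \ref{defa1}, setting $\rho = 0$ collapses the three-parameter class $\mathcal{S}(\xi, \gamma, \rho)$ to the two-parameter class $\mathcal{S}(\xi, \gamma)$, so membership of $\Theta^m_q$ in $\mathcal{S}(\xi, \gamma)$ means precisely membership in $\mathcal{S}(\xi, \gamma, 0)$. I would then substitute $\rho = 0$ into the sufficient condition \eqref{d2}: the bracketed coefficient $[(1-\rho)\sec\xi + \rho(1-\gamma)]$ loses its term $\rho(1-\gamma)$ and reduces to $\sec\xi$, so \eqref{d2} becomes exactly $\frac{qm\sec\xi}{(1-q)^{m+1}} \leq 1-\gamma$, as asserted.

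Alternatively, and perhaps more in the spirit of a self-contained proof, one could argue directly from Lemma \ref{lem1a}, whose coefficient criterion $\sum_{n=2}^{\infty} [(n-1)\sec\xi + (1-\gamma)]|a_n| \leq 1-\gamma$ characterizes $\mathcal{S}(\xi, \gamma)$. Applying this to the coefficients of $\Theta^m_q$ and splitting the resulting sum, I would invoke the identities \eqref{GMS3} and \eqref{GMS4} (giving the closed forms $\frac{1}{(1-q)^m}-1$ and $\frac{qm}{(1-q)^{m+1}}$ respectively) to reduce the left-hand side to $\frac{qm\sec\xi}{1-q} + (1-\gamma)\left(1-(1-q)^m\right)$; bounding this by $1-\gamma$ and dividing by $(1-q)^m$ rearranges to the stated inequality. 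Both routes are entirely routine and lead to the same estimate.

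The one point deserving attention is the ``if and only if'' phrasing, since Theorem \ref{th1} supplies only sufficiency. The forward implication is immediate from either argument above. For necessity I would use that $\Theta^m_q$ has non-negative real Taylor coefficients, so that along the positive real axis the defining expression of $\mathcal{S}(\xi, \gamma)$ attains its extremal behavior; letting $z \to 1^-$ through real values forces the coefficient inequality, and hence the stated condition, to hold. This necessity step is the only part that is not a direct specialization, and it is where I would expect to spend any real effort.
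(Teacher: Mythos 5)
Your sufficiency argument is correct and coincides exactly with the paper's route: the paper obtains Corollary \ref{pc-cor1} precisely by setting $\rho=0$ in Theorem \ref{th1}, whereupon the bracket $[(1-\rho)\sec\xi+\rho(1-\gamma)]$ in \eqref{d2} collapses to $\sec\xi$. Your alternative direct computation via Lemma \ref{lem1a} with the identities \eqref{GMS3} and \eqref{GMS4} simply re-runs the proof of Theorem \ref{th1} at $\rho=0$, arriving at the same quantity $\frac{qm\sec\xi}{1-q}+(1-\gamma)\left(1-(1-q)^m\right)$, whose bound by $1-\gamma$ rearranges to the stated condition; both routes are fine and match the paper.

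The genuine gap is in your necessity step. Letting $z\to 1^-$ through positive reals extracts nothing here, because $\Theta^m_q$ has \emph{non-negative} coefficients $b_n=\binom{n+m-2}{m-1}q^{n-1}(1-q)^m\geq 0$. Along $z=r\in(0,1)$ the defining quantity is
\begin{equation*}
\Re\left(e^{i\xi}\,\frac{z\,(\Theta^m_q)'(z)}{\Theta^m_q(z)}\right)
=\cos\xi\cdot\frac{1+\sum_{n\geq 2}n\,b_n r^{n-1}}{1+\sum_{n\geq 2} b_n r^{n-1}}
\geq \cos\xi > \gamma\cos\xi,
\end{equation*}
since $nb_n\geq b_n$ makes the quotient at least $1$; the membership inequality therefore holds automatically at these points, and $r\to 1^-$ imposes no constraint on the coefficients. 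The $z\to 1^-$ device yields necessity only for Silverman-type classes of functions with \emph{negative} coefficients, $f(z)=z-\sum a_nz^n$ with $a_n\geq 0$, where the quotient tends to $\frac{1-\sum na_n}{1-\sum a_n}$ and can fall below the threshold. You were right to flag the ``if and only if'' as the delicate point, but note that the paper itself never proves it: Lemma \ref{lem1} and Theorem \ref{th1} are stated as sufficient conditions only, and the ``only if'' in Corollary \ref{pc-cor1} is an unsubstantiated overstatement carried through Section 5. So in the end only the ``if'' direction is actually established --- by your argument and by the paper alike --- and the specific necessity argument you propose would fail.
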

\begin{corollary}\label{pc-cor2}
If $m>0,$ then $\Theta^{m}_{q}$is in $\mathcal{K}(\xi, \gamma)$ if and only if

\begin{eqnarray*}\label{lp}
&&[\sec \xi+(1-\gamma)]\frac{m(m+1)q^{2}}{(1-q)^2}+\left[2sec\xi+4(1-\gamma)\right] \frac{mq}{1-q}\nonumber\\
&+&\left[2(1-\gamma)\right]\left(1-(1-q)^m \right)\leq 1-\gamma.
 \end{eqnarray*}
\end{corollary}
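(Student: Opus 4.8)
The plan is to read off Corollary~\ref{pc-cor2} as the $\rho=0$ case of Theorem~\ref{th11}. First I would note that setting $\rho=0$ in Definition~\ref{defb} reduces the denominator $f'(z)+\rho zf''(z)$ to $f'(z)$, so the governing expression becomes $e^{i\xi}\bigl[1+zf''(z)/f'(z)\bigr]$, which is exactly the expression defining $\mathcal{K}(\xi,\gamma)$ in Definition~\ref{defa1}. Hence $\mathcal{K}(\xi,\gamma,0)=\mathcal{K}(\xi,\gamma)$, and the whole question is reduced to specializing the inequality~(\ref{t2}) of Theorem~\ref{th11} at $\rho=0$.

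The next step is the (routine) substitution $\rho=0$ into the three bracketed factors of~(\ref{t2}). The coefficient of $\dfrac{m(m+1)q^{2}}{(1-q)^{2}}$ is $[(1-\rho)\sec\xi+(1-\gamma)]$, which collapses to $[\sec\xi+(1-\gamma)]$; the coefficient of $\dfrac{mq}{1-q}$ is $[2(1-\rho)\sec\xi+(1-\gamma)(4-\rho)]$, which collapses to $[2\sec\xi+4(1-\gamma)]$; and the coefficient of $\bigl(1-(1-q)^{m}\bigr)$ is $(1-\gamma)(2-\rho)$, which collapses to $2(1-\gamma)$. Reassembling these three terms reproduces the inequality asserted in the corollary verbatim, so the ``if'' (sufficiency) direction is immediate from Theorem~\ref{th11}.

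The only point demanding real care is the ``only if'' direction, and this is where I expect the main obstacle to lie. The coefficient bound of Lemma~\ref{lem2a} is a priori merely sufficient, so to obtain the stated equivalence for the particular series $\Theta^m_q$ I would exploit that its Taylor coefficients $a_n=\binom{n+m-2}{m-1}q^{\,n-1}(1-q)^m$ are all nonnegative for $m>0$ and $0\le q<1$. For such a series the defining real-part inequality of $\mathcal{K}(\xi,\gamma)$, tested as $z\to1^{-}$ along the positive real axis, is meant to force the summed coefficient condition~\eqref{GMS2a}, after which the computation of Theorem~\ref{th11} run in reverse yields the necessity of the displayed inequality. The delicate issue is that the $\sec\xi$ factor carried by~\eqref{GMS2a} does not drop out of a naive boundary limit, so pinning down that the nonnegativity of the Pascal coefficients makes the converse genuinely tight is the step I would treat most carefully before declaring the equivalence complete.
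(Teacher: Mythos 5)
Your $\rho=0$ specialization is precisely the paper's own proof: the paper disposes of Corollary \ref{pc-cor2} with the single remark ``by taking $\rho=0$ in Theorems \ref{th1}--\ref{th6},'' and your substitution into the three bracketed coefficients of inequality (\ref{t2}) of Theorem \ref{th11} — giving $[\sec\xi+(1-\gamma)]$, $[2\sec\xi+4(1-\gamma)]$, and $2(1-\gamma)$ — is exactly that computation, carried out correctly. Your identification $\mathcal{K}(\xi,\gamma,0)=\mathcal{K}(\xi,\gamma)$ via Definitions \ref{defb} and \ref{defa1} is also right (note $e^{i\xi}\frac{zf''+f'}{f'}=e^{i\xi}\bigl[1+\frac{zf''}{f'}\bigr]$ when $\rho=0$). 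So for the ``if'' direction your proposal and the paper coincide.

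On the ``only if'' direction, your suspicion is justified, but you should be aware that the paper itself never proves it: Lemmas \ref{lem1}, \ref{lem2}, \ref{lem1a}, \ref{lem2a} are all stated as one-directional sufficiency criteria, the proof of Theorem \ref{th11} only verifies that $M_2(\xi,\gamma,\rho)\leq 1-\gamma$ under (\ref{t2}), and the concluding sentence of that proof is explicitly ``if (\ref{t2}) is satisfied.'' The ``if and only if'' phrasing in the corollary (and in Theorems \ref{th2}, \ref{th2a}, \ref{th3}, \ref{th6}) is therefore an overstatement in the paper's wording, not a gap you are expected to fill. Your technical diagnosis of why a converse would not come for free is sound: nonnegativity of the Pascal coefficients lets one compute the radial limit $z\to1^{-}$, but the oblique half-plane condition $\mathrm{Re}\,(e^{i\xi}w)>\gamma\cos\xi$ is not extremized along the positive real axis, so the boundary test does not reproduce the $\sec\xi$-weighted sum \eqref{GMS2a}; indeed for spirallike-type classes the coefficient inequality is known to be sufficient but not necessary in general. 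The correct resolution is not to labor at reversing the computation but to read the corollary as a sufficiency statement (replace ``if and only if'' by ``if''), at which point your first two paragraphs constitute a complete proof matching the paper's.
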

\begin{corollary}\label{pc-cor3}
If $\ f\in \mathcal{R} ^{\tau } (\vartheta,\delta)\ $then $\Lambda_q^m$
is in $\mathcal{S}(\xi, \gamma)$ if and only if
\begin{eqnarray*}
&&\frac{2\left| \tau \right|(1-\delta) }{\vartheta} \left\{\frac{}{}sec\xi\left[1-(1-q)^{m}\right]\right.\\
&+&\left.\frac{(1-\gamma-sec\xi)}{q(m-1)}\left[(1-q)-(1-q)^m-q(m-1)(1-q)^m\right]\right\}\leq1-\gamma.
\end{eqnarray*}
\end{corollary}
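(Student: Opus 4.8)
The plan is to obtain this corollary directly as the $\rho=0$ specialization of Theorem \ref{th2}, so no independent argument is needed. First I would observe that, by Definition \ref{defa1}, the class $\mathcal{S}(\xi,\gamma)$ is precisely $\mathcal{S}(\xi,\gamma,0)$, i.e.\ the spiral-like class of Definition \ref{defa} with the parameter $\rho$ set to zero. Consequently the membership assertion $\Lambda_q^m f(z)\in\mathcal{S}(\xi,\gamma)$ is literally the same as $\Lambda_q^m f(z)\in\mathcal{S}(\xi,\gamma,0)$, and the full characterization of Theorem \ref{th2} applies verbatim once we put $\rho=0$ in the hypothesis and conclusion.

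Next I would substitute $\rho=0$ into the characterizing inequality \eqref{d3}. The two $\rho$-dependent coefficient blocks collapse cleanly: the factor $[(1-\rho)\sec\xi+\rho(1-\gamma)]$ reduces to $\sec\xi$, while $(1-\rho)(1-\gamma-\sec\xi)$ reduces to $(1-\gamma-\sec\xi)$. Every remaining term inside \eqref{d3}, namely the quantities $1-(1-q)^m$ and $(1-q)-(1-q)^m-q(m-1)(1-q)^m$ together with the prefactor $\tfrac{2|\tau|(1-\delta)}{\vartheta}$, is independent of $\rho$ and therefore carries over unchanged. Inserting the two simplified coefficients into \eqref{d3} reproduces exactly the inequality displayed in the corollary.

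The main ``obstacle'' here is essentially clerical rather than mathematical: one need only confirm that $\rho$ enters \eqref{d3} solely through the two explicit factors just identified, and that no further $\rho$-dependence is concealed in the bracketed $q$- and $m$-expressions. Since Theorem \ref{th2} already exhibits \eqref{d3} with $\rho$ appearing only in those two places, this verification is immediate, and the corollary follows without any fresh coefficient estimation or series manipulation beyond what Theorem \ref{th2} supplies.
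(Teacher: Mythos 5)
Your proposal is correct and is exactly the paper's own route: the paper derives Corollary \ref{pc-cor3} by setting $\rho=0$ in Theorem \ref{th2}, using that $\mathcal{S}(\xi,\gamma)=\mathcal{S}(\xi,\gamma,0)$, so that $[(1-\rho)\sec\xi+\rho(1-\gamma)]$ collapses to $\sec\xi$ and $(1-\rho)(1-\gamma-\sec\xi)$ to $(1-\gamma-\sec\xi)$, with all remaining $q$- and $m$-dependent factors unchanged. Your verification that $\rho$ enters \eqref{d3} only through those two coefficients is the whole content of the specialization, and it reproduces the stated inequality verbatim.
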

\begin{corollary}\label{pc-cor4}
If $\ f\in \mathcal{R} ^{\tau } (\vartheta,\delta),\ $then $\Lambda_q^m$
is in $\mathcal{K}(\xi, \gamma)$ if and only if
\begin{eqnarray*}
&&\frac{2\left| \tau \right|(1-\delta) }{\vartheta} \left[[\sec \xi+(1-\gamma)]\frac{m(m+1)q^{2}}{(1-q)^2}+\left[2sec\xi+4(1-\gamma)\right] \frac{mq}{1-q}\right.\nonumber\\&+&\left.\left[2(1-\gamma)\right]\left(1-(1-q)^m \right)\frac{}{}\right]\leq 1-\gamma .
\end{eqnarray*}
\end{corollary}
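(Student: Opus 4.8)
The plan is to obtain Corollary \ref{pc-cor4} as the $\rho=0$ specialization of Theorem \ref{th2a}. First I would note that, by Definition \ref{defa1}, the class $\mathcal{K}(\xi,\gamma)$ coincides with $\mathcal{K}(\xi,\gamma,\rho)$ at $\rho=0$, so the assertion $\Lambda_q^m f\in\mathcal{K}(\xi,\gamma)$ is literally $\Lambda_q^m f\in\mathcal{K}(\xi,\gamma,0)$. Substituting $\rho=0$ into the inequality of Theorem \ref{th2a} then collapses the three bracketed coefficients $[(1-\rho)\sec\xi+(1-\gamma)]$, $[2(1-\rho)\sec\xi+(1-\gamma)(4-\rho)]$ and $[(1-\gamma)(2-\rho)]$ into $[\sec\xi+(1-\gamma)]$, $[2\sec\xi+4(1-\gamma)]$ and $2(1-\gamma)$, which is exactly the stated bound. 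On this route there is essentially nothing left to prove.

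For a self-contained derivation I would instead argue directly from the coefficient criterion. By Lemma \ref{lem2a}, a sufficient condition for $\Lambda_q^m f\in\mathcal{K}(\xi,\gamma)$ is
\[
\sum_{n=2}^{\infty} n\bigl[(n-1)\sec\xi+(1-\gamma)\bigr]\binom{n+m-2}{m-1}q^{n-1}(1-q)^m|a_n|\leq 1-\gamma,
\]
since the $n$-th coefficient of $\Lambda_q^m f$ is $\binom{n+m-2}{m-1}q^{n-1}(1-q)^m a_n$. As $f\in\mathcal{R}^{\tau}(\vartheta,\delta)$, Lemma \ref{lem4} gives $|a_n|\leq 2|\tau|(1-\delta)/(1+\vartheta(n-1))$, and the elementary estimate $1+\vartheta(n-1)\geq\vartheta n$ furnishes a factor $1/n$ that cancels the leading $n$ in the summand. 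Pulling out the constant $2|\tau|(1-\delta)/\vartheta$ then leaves precisely the series $M_2(\xi,\gamma,0)$ computed in the proof of Theorem \ref{th11}.

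The only computation of substance is evaluating that series in closed form. Using $n^2=(n-1)(n-2)+3(n-1)+1$ and $n=(n-1)+1$ to split the summand, each resulting piece is one of the Pascal-type sums resolved by \eqref{GMS3}, \eqref{GMS4} and \eqref{GMS5}: after the factor $(1-q)^m$ the $(n-1)(n-2)$-sum contributes $m(m+1)q^2/(1-q)^2$, the $(n-1)$-sum contributes $mq/(1-q)$, and the constant sum contributes $1-(1-q)^m$. Reassembling these with their coefficients reproduces the bracketed expression of the corollary. Since this is verbatim the $\rho=0$ instance of the bookkeeping already carried out for Theorem \ref{th11}, I expect no genuine obstacle; the result is an immediate corollary, and only the algebraic reindexing of the sums requires care.
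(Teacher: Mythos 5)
Your proposal matches the paper's own route exactly: the paper obtains Corollary \ref{pc-cor4} precisely by setting $\rho=0$ in Theorem \ref{th2a}, and your self-contained derivation (Lemma \ref{lem2a} plus the bound $|a_n|\leq 2|\tau|(1-\delta)/(1+\vartheta(n-1))\leq 2|\tau|(1-\delta)/(\vartheta n)$ from Lemma \ref{lem4}, followed by the splittings $n^2=(n-1)(n-2)+3(n-1)+1$ and $n=(n-1)+1$ resolved via \eqref{GMS3}--\eqref{GMS5}) is verbatim the technique used for Theorems \ref{th11} and \ref{th2a}. One caveat you share with the paper: since Lemmas \ref{lem2} and \ref{lem2a} give only sufficient coefficient conditions, this argument establishes only the ``if'' direction, not the claimed ``if and only if.''
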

\begin{corollary}\label{pc-cor5}
If $m>0,$ then the integral operator $\mathcal{G}^{m}_{q}(z)$%
given by (\ref{io1}) is in $\mathcal{K}(\xi, \gamma)$ if and only if
\begin{eqnarray*}
\frac{q m\sec \xi}{(1-q)^{m+1}}\leq1-\gamma.
\end{eqnarray*}
\end{corollary}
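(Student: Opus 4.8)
The plan is to recognize that this corollary is nothing but the $\rho=0$ specialization of Theorem \ref{th3}. First I would observe that, by Definition \ref{defa1}, the class $\mathcal{K}(\xi,\gamma)$ coincides with $\mathcal{K}(\xi,\gamma,0)$; hence membership $\mathcal{G}^{m}_{q}\in\mathcal{K}(\xi,\gamma)$ is governed by Theorem \ref{th3} with $\rho$ set to zero. Substituting $\rho=0$ into the condition of Theorem \ref{th3}, namely $[(1-\rho)\sec\xi+\rho(1-\gamma)]\frac{qm}{(1-q)^{m+1}}\leq 1-\gamma$, collapses the bracket to $\sec\xi$ and yields precisely $\frac{qm\sec\xi}{(1-q)^{m+1}}\leq 1-\gamma$, which is the asserted inequality.

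For a self-contained argument I would instead work directly from the series
\[
\mathcal{G}^{m}_{q}(z)=z+\sum_{n=2}^{\infty}\binom{n+m-2}{m-1}q^{n-1}(1-q)^{m}\frac{z^{n}}{n}
\]
and apply the coefficient criterion of Lemma \ref{lem2a}. The key simplification, and essentially the only point requiring attention, is that the factor $1/n$ carried by the coefficients of $\mathcal{G}^{m}_{q}$ cancels exactly against the multiplicative $n$ appearing in the $\mathcal{K}(\xi,\gamma)$ test sum. This cancellation reduces the required inequality to $\sum_{n=2}^{\infty}[(n-1)\sec\xi+(1-\gamma)]\binom{n+m-2}{m-1}q^{n-1}(1-q)^{m}\leq 1-\gamma$, which is verbatim the sum controlling $\Theta^{m}_{q}\in\mathcal{S}(\xi,\gamma)$ in Corollary \ref{pc-cor1}. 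This is the structural reason the two corollaries share the same bound, and it reflects the underlying Alexander relation between $\mathcal{K}$ and $\mathcal{S}$.

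It then remains to evaluate this sum. I would split it into the $(n-1)\sec\xi$ part, handled by identity \eqref{GMS4}, and the $(1-\gamma)$ part, handled by identity \eqref{GMS3}. After multiplying through by $(1-q)^{m}$ and cancelling, the sum simplifies to $\sec\xi\,\frac{qm}{1-q}+(1-\gamma)\bigl(1-(1-q)^{m}\bigr)$. Requiring this quantity to be bounded above by $1-\gamma$ and moving the term $(1-\gamma)(1-(1-q)^{m})$ to the right leaves $\sec\xi\,\frac{qm}{1-q}\leq(1-\gamma)(1-q)^{m}$, which on dividing by $(1-q)^{m}$ is exactly $\frac{qm\sec\xi}{(1-q)^{m+1}}\leq 1-\gamma$.

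I expect no genuine obstacle here, since the result is a clean specialization. The only bookkeeping to watch is the identification $\mathcal{K}(\xi,\gamma)=\mathcal{K}(\xi,\gamma,0)$ and the interpretation of the \emph{if and only if}: here it means that the sufficient coefficient condition of Lemma \ref{lem2a}, after the cancellation and the two summations above, is equivalent to the single displayed numerical inequality.
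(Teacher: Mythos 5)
Your proposal is correct and follows essentially the same route as the paper: the paper obtains Corollary \ref{pc-cor5} precisely by setting $\rho=0$ in Theorem \ref{th3}, whose own proof rests on the same $1/n$-versus-$n$ cancellation against the coefficient criterion and on the identities \eqref{GMS3} and \eqref{GMS4}, exactly as in your self-contained computation. Your closing caveat about the ``if and only if'' is also well taken, since the paper's lemmas furnish only sufficient coefficient conditions, so the genuine equivalence lives at the level of the displayed numerical inequality, just as you describe.
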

\begin{corollary}\label{pc-cor6}
If $m>0,$ then the integral operator $\mathcal{G}^{m}_{q}(z)$%
given by (\ref{io1}) is in $\mathcal{S}(\xi, \gamma)$ if and only if
~%
\begin{eqnarray*}
&&sec\xi\left[1-(1-q)^{m}\right]\\
&+&\frac{(1-\gamma-sec\xi)}{q(m-1)}\left[(1-q)-(1-q)^m-q(m-1)(1-q)^m\right]\leq1-\gamma.
\end{eqnarray*}
\end{corollary}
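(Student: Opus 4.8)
The quickest route is to read this statement off as the $\rho = 0$ specialization of Theorem \ref{th6}. Setting $\rho = 0$ collapses the two weight-coefficients appearing there: $(1-\rho)\sec\xi + \rho(1-\gamma)$ reduces to $\sec\xi$, and $(1-\rho)(1-\gamma-\sec\xi)$ reduces to $1-\gamma-\sec\xi$. Substituting these into the inequality of Theorem \ref{th6} produces verbatim the displayed condition. Since Theorem \ref{th6} is an equivalence, the specialized condition is likewise necessary and sufficient, and because $\mathcal{S}(\xi,\gamma,0) = \mathcal{S}(\xi,\gamma)$ by Definition \ref{defa1}, the membership $\mathcal{G}^m_q \in \mathcal{S}(\xi,\gamma)$ is exactly what the specialization delivers.

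For a self-contained argument I would instead invoke the $\rho=0$ coefficient test, Lemma \ref{lem1a}. Writing
$$\mathcal{G}^m_q(z) = z + \sum_{n=2}^\infty \binom{n+m-2}{m-1}q^{n-1}(1-q)^m\,\frac{z^n}{n},$$
so that $a_n = \frac{1}{n}\binom{n+m-2}{m-1}q^{n-1}(1-q)^m$, it suffices to bound
$$M(\xi,\gamma) := \sum_{n=2}^\infty \frac{1}{n}\big[(n-1)\sec\xi + (1-\gamma)\big]\binom{n+m-2}{m-1}q^{n-1}(1-q)^m$$
by $1-\gamma$. First I would split the weight by the elementary identity $\frac{1}{n}[(n-1)\sec\xi + (1-\gamma)] = \sec\xi + (1-\gamma-\sec\xi)\frac{1}{n}$, separating a constant term (summed at once via \eqref{GMS3}) from a $1/n$ term.

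The one genuinely nontrivial ingredient, the same one underlying Theorem \ref{th6}, is the index-shift $\frac{1}{n}\binom{n+m-2}{m-1} = \frac{1}{m-1}\binom{n+m-2}{m-2}$, which turns $\sum_{n\ge 2}\frac{1}{n}\binom{n+m-2}{m-1}q^{n-1}$ into $\frac{1}{q(m-1)}\sum_{n\ge 2}\binom{n+m-2}{m-2}q^n$; this last series is the standard negative-binomial sum $(1-q)^{-(m-1)}$, minus its $n=0,1$ terms $1 + (m-1)q$. Assembling the two pieces, factoring out $(1-q)^m$, and simplifying gives $M(\xi,\gamma)$ in closed form equal to the left-hand side of the displayed inequality, whence $M(\xi,\gamma) \le 1-\gamma$ if and only if that inequality holds. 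The only caveat, as in Theorem \ref{th6}, is the implicit restriction $m>1$ needed for the factor $q(m-1)$ and for $\binom{n+m-2}{m-2}$ to generate the series $(1-q)^{-(m-1)}$; the borderline value $m=1$ would require a separate and simpler treatment.
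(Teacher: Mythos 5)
Your proposal matches the paper's approach: the paper derives this corollary exactly as you do, by setting $\rho=0$ in Theorem \ref{th6} (whose own proof uses the same weight-splitting $\frac{1}{n}\left[(n-1)\sec\xi+(1-\gamma)\right]=\sec\xi+(1-\gamma-\sec\xi)\frac{1}{n}$ and the index-shift $\frac{1}{n}\binom{n+m-2}{m-1}=\frac{1}{m-1}\binom{n+m-2}{m-2}$ that your self-contained sketch reproduces). Your observation that the factor $q(m-1)$ implicitly requires $m>1$ is a legitimate caveat that the paper itself silently overlooks, so your treatment is, if anything, slightly more careful.
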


\end{document}